\documentclass[12pt]{amsart}

\usepackage{amsmath}
\usepackage{amssymb}
\usepackage{amsthm}
\usepackage[margin=1in, top=1.5in]{geometry}

\usepackage[slantedGreek]{mathptmx}
\usepackage{url}
\usepackage{bbm}
\usepackage{booktabs}
\usepackage{graphicx}
\usepackage{float}

\newcommand{\grad}{\nabla{}}

\newtheorem{theorem}{Theorem}
\newtheorem{lemma}{Lemma}

\newtheorem*{theorem*}{Theorem}

\renewcommand{\u}{{\bf u}}
\newcommand{\uh}{\hat{u}}

\title[RD models]{
Reaction-diffusion models for morphological patterning of \lowercase{h}ESC\lowercase{s}}

\begin{document}

\author{Prajakta Bedekar\textsuperscript{1}, 
Ilya Timofeyev\textsuperscript{2}, Aryeh Warmflash\textsuperscript{3}, and 
Misha Perepelitsa\textsuperscript{4}}

\begin{abstract}
In this paper we consider mathematical modeling of the dynamics of self-organized patterning of spatially confined human embryonic stem cells
(hESCs) treated with BMP4 (gastruloids) described in recent experimental works \cite{Warmflash, Chhabra}.

In the first part of the paper we use the activator-inhibitor equations of  Gierer and Meinhardt  to identify 3 reaction-diffusion regimes for each of the three morphogenic proteins, BMP4, Wnt and Nodal, based on the characteristic features of the dynamic patterning. We identify appropriate boundary conditions 
which correspond to the experimental setup and
perform numerical simulations of the reaction-diffusion (RD) systems, using the finite element approximation, to confirm that the RD systems in these regimes produce realistic dynamics of the protein concentrations.  

In the second part of the paper we use analytic tools to address the questions of the existence and stability of non-homogeneous steady states for the reaction-diffusion systems of the type  considered in the first part of the paper. We find sufficient conditions on the data of the problem under which the system has  an universal attractor.     
\end{abstract}

\footnotetext[1]{Department of Mathematics, University of Houston, Houston, TX.}
\footnotetext[2]{Department of Mathematics, University of Houston, Houston, TX. Author acknowledge the support from NSF grant DMS-1903270.}
\footnotetext[3]{Laboratory of Systems Stem Cell and Developmental Biology, Department of BioSciences, Rice University, Houston, TX. Author acknowledges support from NSF grant MCB-1553228, Simons Foundation grant 511079, and Welch Foundation grant C-2021.}
\footnotetext[4]{Corresponding Author. Email: mperepel@central.uh.edu. Department of Mathematics, University of Houston, Houston, TX. Author acknowledge the support from NSF grant DMS-1903270.}

\maketitle

\section{Introduction}
The use of reaction-diffusion (RD) system to model morphological patterning of developing organism started with a seminal work of Turing \cite{Turing}, who observed that in some cases the homogeneous distributions of chemical with coupled reaction rates are not stable, and upon small perturbations give rise to  sizable, non-homogeneous patterns of chemicals. Cells at the peaks of these patterns can then differ from the remainder of the cells in their cell fates, growth rates, or morphogenetic movements. That is, the morphological patterning is a consequence prior chemical ``pre-patterning.'' This approach have been extensively developed over the years, see for example  Koch and Meinhards \cite{KM},  Gierer and Meinhardt \cite{GM}, Raspopovic  et al. \cite{Rasp}, Nakamura et al. \cite{Nacamura}. 

In the framework of RD systems, pre-patterning may also occur when the solution of a RD system transitions into a non-homogeneous, stable steady-state, after initial activation, see Gierer and Meinhardt \cite{GM}. This scenario is not uncommon, however is harder to treat analytically, since exact formulas for non-trivial steady states are not available.

Other mechanisms of morphogenesis such as of  mechanical and mechanochemical types had been developed, see for example Murray at al. \cite{MOH},  Murray \cite{Murray}. These models take into account not only chemical but also mechanical properties of  treated cell samples, such as, for example, the motion of chemical in the sample induced by the growth of cells.

In this paper we will discuss  recent experimental findings by Warmflash et al. \cite{Warmflash},  Chhabra et al. \cite{Chhabra} and Heemskerk et al. \cite{warm2} on cell fate differentiation of human embryonic stem cells (hESCs) that do not  fit into the Turing paradigm of morphogenesis, or the other mechanisms mentioned above. In a typical experiment, a spacial  confined colony of cells is treated with BMP4 (bone morphogenetic proteins) which leads to differentiation of cells into the three embryonic germ layers:  endoderm, mesoderm, and ectoderm, surrounded by an outer ring of extraembryonic cells. 

It has been established in earlier works, see for example Arnold et al. \cite{Arnold}, that BMP4 results in expression of Wnt and Nodal proteins that are essential for formations of  germ layers.

The evidence accumulated in \cite{Chhabra} indicates that fate differentiation occurs not after the formation of stable patterns of BMP4/Wnt/Nodal, but concurrently, during the propagation of signaling waves of Wnt and Nodal, unlike the pre-patterning in Turing-type process.
Additionally, the terminal, stable distributions of these proteins do not correspond (decisively) to location of the germ layers. For example, Wnt and Nodal signaling, which synergize to generate mesendodderm, both spread into the middle region where ectoderm forms. We mention in passing, that the distributions of proteins do tend to non-homogeneous, almost radial, steady states.  Moreover, Chhabra et al. \cite{Chhabra} performed a series of experiments ruling out the cell motion and cell growth as effective mechanisms of morphogenesis. The later facts points strongly in favor of a RD system as correct mathematical model.

We summarize below some of the
 characteristic features  for the dynamics of distribution of proteins during the patterning, 
 obtained in  Chhabra et al. \cite{Chhabra} and Heemskerk et al. \cite{warm2}, that we address in this paper. 
\begin{enumerate}
\item Initially high and uniform over the entire domain, BMP4 signaling activity evolves into a region of high activity near the boundary and low activity in the middle and central parts of the domain;
\item boundary of BMP4 initiates waves of Nodal and Wnt that move into the interior towards the center of the domain;
\item propagation of the Nodal wave proceeds independently of BMP4 (and Wnt), after a certain activation period;
\item distributions of BMP4, Wnt and Nodal activities tend to steady states, by the end of the experiment, with the peaks at the boundary, middle of the domain and the center of the domain, respectively.
The difference between the peak value of Wnt and its value at the center smaller than the difference between the peak value and the values near the boundary. The final distributions of all proteins  appear to be radial, non-homogeneous steady states. 
\end{enumerate}

While the understanding of all mechanisms is far from being complete,  an effort was made to generate dynamics consistent with (1)--(4) in the framework of activator-inhibitor RD systems. Tewary et al. \cite{Tewary} developed a RD model model that produces realistic patterns of BMP4, with realistic dependence  on parameters, such as the size of the cell colony. However, due the fact that the reaction part of the model is linear, the model requires selection of {\it matching} boundary and initial conditions, that do not reflect the state of the problem at the beginning of the experiment. For example, it assumed that BMP4 inhibitor does not diffuse from the sample boundary, and, initially, peaks at the center of the sample. Thus,  the model is partially ``exogenous''.  

Chhabra et al. \cite{Chhabra} addressed the mathematical modeling of dynamics described in part (3)   of the above list. In this model, BMP4 acts as an activator for Wnt, which in its turn activates Nodal. The key assumption is a reaction term the equation for Nodal, that incorporates a threshold parameter, depending on the concentration of Nodal, that switches production from being Wnt dependent to auto-catalytic. Other assumptions include structurally different mechanisms of activation/inhibition for Wnt and Nodal. 

The analysis of RD systems in the above mentioned papers  relies on the numerical solutions of the corresponding systems of PDEs.

The purpose of the present paper is to develop closed form (``endogenous'') models based on the classical Gierer-Meinhardt activator-inhibitor system, see equations \eqref{eq:u2}, \eqref{eq:v2} below, that reproduce behaviors (1)--(4).  By the closed form we mean, a solution to an initial-boundary value problem where the initial and boundary conditions reflect the actual experimental setup.

In particular, we assume that all substances can diffuse off the colony edge, and  use ``the Newton's law of cooling'' with appropriate  background values of the substances.

In the first part of the paper, using numerical simulations, we show that  realistic dynamics of the proteins can be obtained by choosing suitable reaction coefficients in the activator-inhibitor system. 
The dynamics of BMP4 is best described by a system with a single, stable node, to which we will refer as type 1 system, see section \ref{sec:model}.  The dynamic of Wnt and Nodal systems fit to the patterns produced by type 2 systems, which have a stable node, a stable focus and a saddle point in the phase plane.  Moreover, we show that  different behaviors  of Wnt and Nodal, as described in  (5), can be attributed to the size of  the reaction coefficients alone. This is due to a general fact that scaling reaction coefficients in a RD systems, which leaves the phase portrait unchanged, while retaining the same diffusion coefficient, results in a different dynamics and, in particular, in  different steady states. Thus, we provide another explanation of phenomenological properties (1)--(4) based on the dynamical  differences structurally similar system of PDEs.  The analysis can be useful in providing an estimates on the ranges of the reaction coefficients that distinguish BMP4, Wnt and Nodal at the level of activator-inhibitor systems.

In the second part of the paper we address the stability property in part (4). The main mathematical difficulty comes from the fact that that the RD models (equations + boundary conditions) in question  do not have, in general, homogeneous steady states. Thus, linearizing equations on a constant state, and solving for eigenvalues and eigenfunctions  does not provide meaningful information, because the constant states are not solutions, the fact that sometimes is overlooked in biological literature. The growth of oscillations should be measured with respect to a proper steady state, which, in this case is non-homogeneous.

This leads us to the following problems that we address in this paper: existence of  steady states and  do exist, and  sufficient conditions for stability.

There are local in time, unique, classical solutions to RD systems in the H\''{o}lder space $C^{2+\alpha,1+\alpha/2},$ as was proved by Ladyzhenskaja et al. \cite{Ladyzh} for a more general system of parabolic equations. Estimates on the $max$ norm of the solution is needed to extend solutions for all times $t>0.$  This was done by Rothe \cite{Rothe} for activator-inhibitor systems with no-slip boundary conditions, which does not apply in our case. Moreover, the estimates obtained in \cite{Rothe} depend on the diffusion coefficients, which greatly complicates the asymptotic analysis. We note here, that the special structure of the activator-inhibitor equations does not allow the application of the theory of invariant regions of Chueh et al. \cite{ChCS}, another well-known technique for the asymptotic analysis, see for example a book of Smoller \cite{Smoller}. 

Our approach is to rely on the maximum principle for the parabolic equations to obtain the uniform in $(x,t)$ estimates of the solution. The application of this method places some restrictions on the coefficients of the RD system.  The key point here is that the bounds are independent of the diffusion coefficient. 

Then, we use the energy-types estimates for $L^2$ norms of the solution  and it time derivative to identify the suitable stability condition that implies the exponential decay of the norm of the time derivative. Further analysis required to bound the spatial gradient of the solution in $L^2$ norm, with an upper bound, independent of time. The  later fact allows us to extract a strongly convergent sequence $\u(x,t_n)$ with $t_n\to \infty,$ whose limiting point is a steady state solution of the activator-inhibitor system. Here, $\u$ is a solution vector $\u{}={}(u,v).$ Finally we show that such steady state, $\u_s,$ is the limit of the $\u(x,t),$ at $t\to \infty.$ The stability condition mentioned above, for given source terms $f(x),$ $g(x),$ size of the domain $\Omega$ and the diffusion coefficient $\mu$ restricts the size of the initial data $\max_{\Omega}|\u_0|.$  This condition defines the basin of attraction for the steady state, i.e., there is a ball $B$ in $L^\infty,$ centered at zero, such the steady state $\u_s$ belongs to $B$ and for any initial data $\u_0\in L^\infty\cap C^{2+\alpha},$ solution  $\u(x,t)$ converges to that steady state.

\begin{figure}[t]
\centering
\includegraphics[scale=0.5]{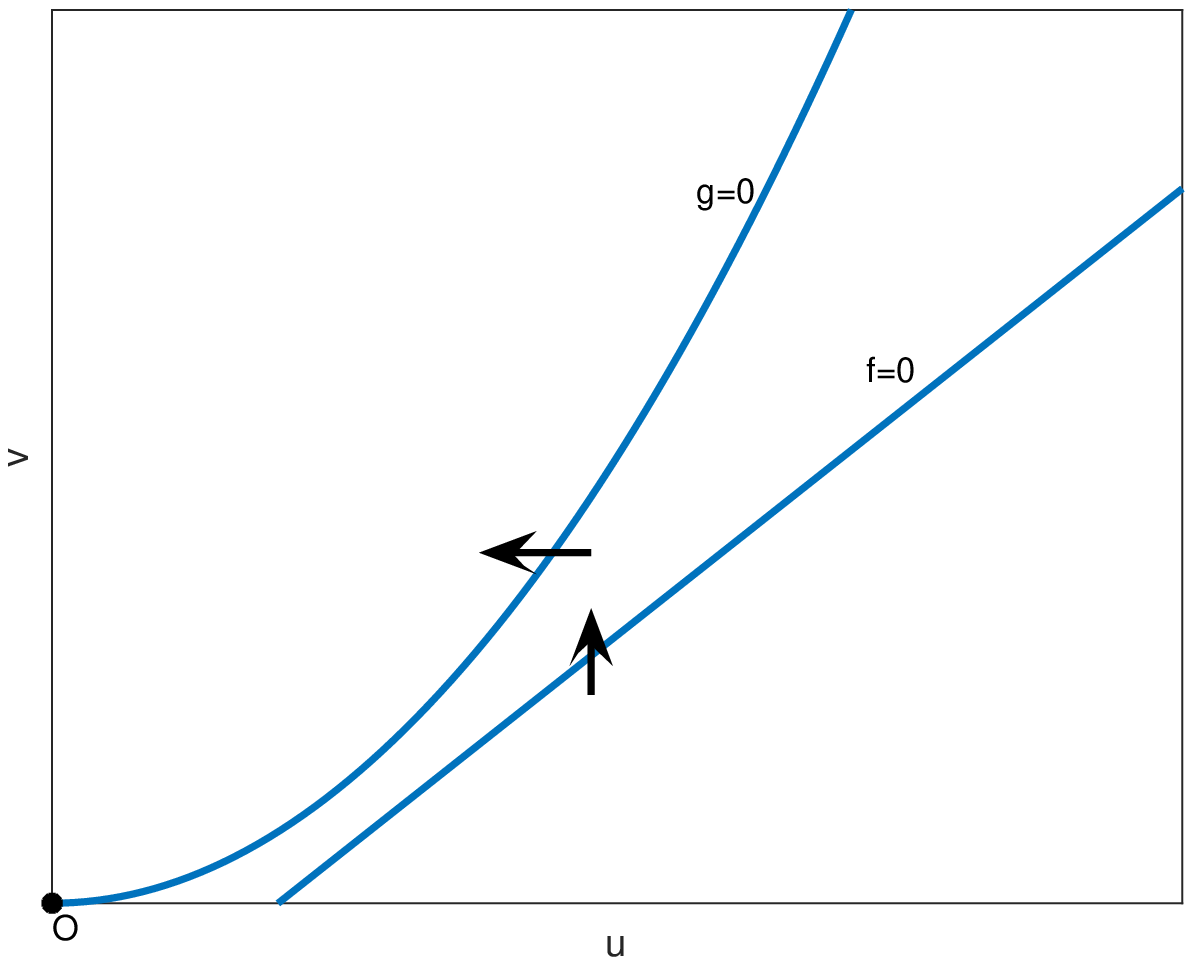}
\includegraphics[scale=0.5]{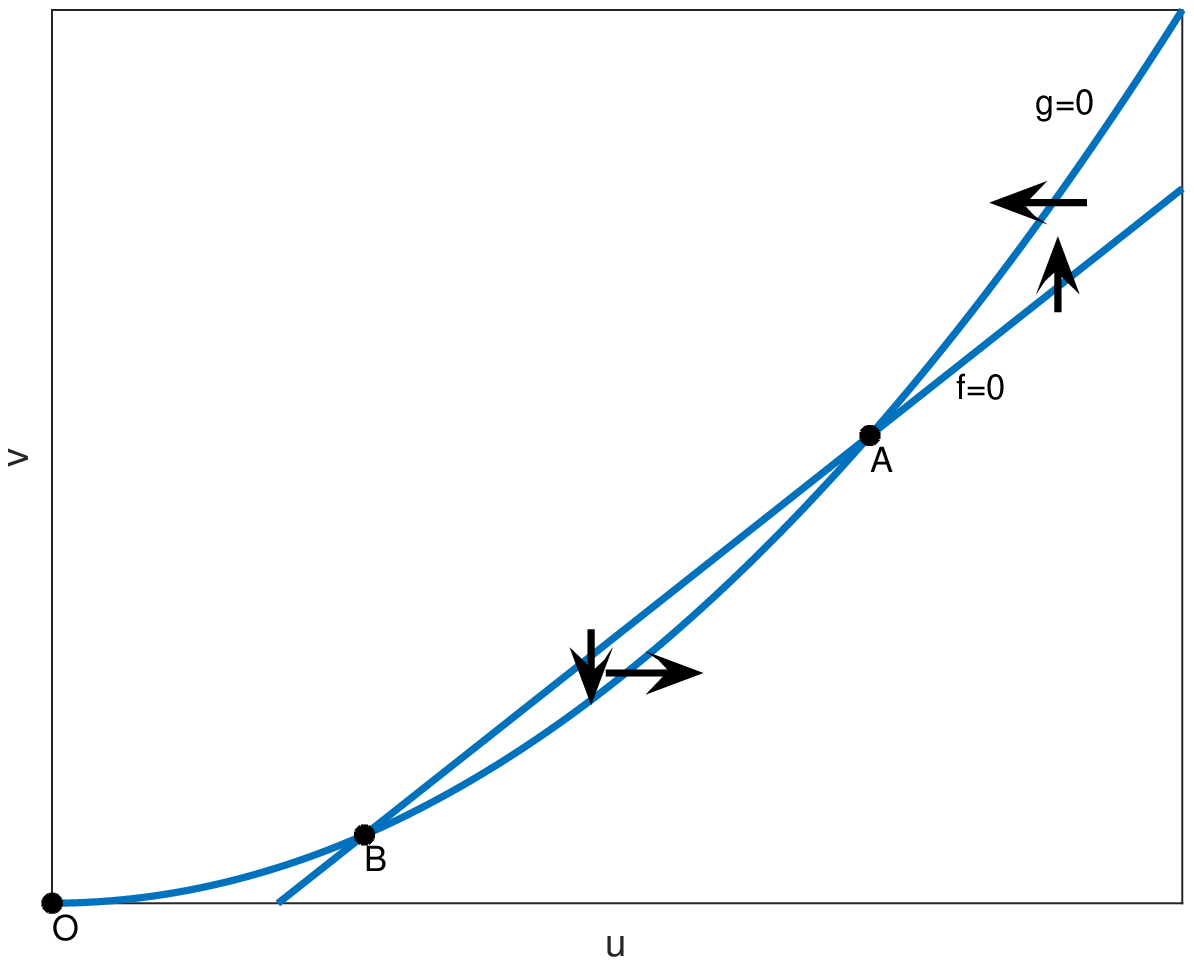}
\caption{Null clines for the reaction dynamics.  Left plot is type 1 reaction with a single fixed point $O$ (stable node). Right plot is type 2 reaction with a stable node $O,$ sable focus $A,$ and saddle $B.$  Arrows show the direction of the flow field.}
\label{fig:nul}
\end{figure}

\section{RD models of activator-inhibitor type}
\label{sec:model}
We consider an activator-inhibitor RD system 
\begin{eqnarray}
\label{eq:u}
\partial_t u{}-{}\mu_u\Delta u{}&=&{}-au {}+{}\frac{bu^2}{1+v},\\
\label{eq:v}
\partial_t v{}-{}\mu_v\Delta v{}&=&{}-cv{}+{}du^2,
\end{eqnarray}
where $\mu_u,$ $\mu_v$ are positive diffusion coefficients, $a,c>0$ are rates of decay and $b,d$ are positive reaction coefficients. This system was introduced by Gierer and Meinhardt \cite{GM}. The inhibitor enters the first equation through the factor $(1+v)^{-1},$ where we added $1$  avoid infinitely high rates when the level of the inhibitor $v$ is small. This is a natural assumption in the experiments described in \cite{Warmflash, Chhabra}. 

Here we denote by $f(u,v)$ and $g(u,v)$ the right-hand side of equations \eqref{eq:u}, \eqref{eq:v}.
There is a single  fixed point (stable node)  $(u_0,v_0){}={}(0,0), $ if $b^2c<4a^2d.$  When $b^2c>4a^2d$ there are three fixed points $(0,0),$ $(u_1,v_1),$ $(u_2,v_2)$ where
\begin{equation}
\label{roots}
u_{1,2}{}={}\frac{cb{}\pm{}\sqrt{(cb)^2{}-{}4a^2cd}}{2da},\quad v_{1,2}{}={}\frac{b}{a}u-1.
\end{equation}
In this case $(0,0)$ is a stable node, as the gradient matrix 
\[
\frac{\partial(f,g)}{\partial(u,v)}{}={}
\left[
\begin{array}{cc}
-a & 0\\
0 & -c
\end{array}
\right].
\]
At other two points, the gradient matrix can be computed to be equal to  
\[
\frac{\partial(f,g)}{\partial(u,v)}{}={}
\left[
\begin{array}{cc}
a & -\frac{a^2}{b}\\
2du & -c
\end{array}
\right],
\]
where $u$ is evaluated at a fixed point. Thus, we obtain the values of the trace and the determinant of the gradient matrix as
\[
{\rm trace}{}={}a-c,\quad  {\rm Det}{}={}\frac{2da^2}{b}u-ac,
\]
These values determine the type of each fixed point. Substituting the values for $u_{1,2}$ from \eqref{roots} we find that ${\rm Det}{}={}\pm\sqrt{(ac)^2-4dca^4b^{-2}}.$
This gives a saddle point, when the value of the determinant is negative. The remaining point is either a stable focus ($a<c$), a center ($a=c$), or an unstable focus ($a>c$).
For the non-linear system \eqref{eq:u}, \eqref{eq:v}, the last two possibilities result in periodic motion. This type of motion is not observed in the experiment discussed in this paper, so we assume that $a<c.$ The null clines  for different values of the decay/reaction coefficients are sketched in Figure \ref{fig:nul}, which we use to distinguish the corresponding RD systems as type 1 and type 2. 

We will use system \eqref{eq:u}, \eqref{eq:v} to model the dynamics of BMP4 and its inhibitor Noggin. To study the signaling waves of Nodal and Wnt we will use a source term $f(x)$ in the activator equation that models the influence of BMP4 on production of Wnt:
\begin{eqnarray}
\label{eq:u2}
\partial_t u{}-{}\mu_u\Delta u{}&=&{}-au {}+{}\frac{bu^2}{1+v}+f(x),\\
\label{eq:v2}
\partial_t v{}-{}\mu_v\Delta v{}&=&{}-cv{}+{}du^2.
\end{eqnarray}
Here $u$ is the concentration of Wnt and $v$ the its inhibitor DKK.
In the numerical simulations we will assume that $f(x)$ is concentrated near the boundary of the domain which reflects the experimentally observed distribution of BMP. The activation of Nodal occurs through ${\rm BMP}\to {\rm Wnt}\to {\rm Nodal}$ signaling pathway. We will model this by a simplifying to  ${\rm BMP}\to{\rm Nodal}$ signaling, and using equations \eqref{eq:u2}, \eqref{eq:v2}, with a different set of reaction coefficients, to model the dynamics of Nodal ant its inhibitor Lefty.

As we will show in by numerical simulation, the system of BMP4 and its inhibitor has a better fit into the reaction system of type 1, see Figure \ref{fig:nul}, while the systems for Wnt and Nodal are better described by type 2 dynamics.


The initial data correspond to the high initial concentration of BMP4 and low (zero) concentrations of other chemicals, in accordance with experiments described in \cite{Warmflash}.

To complete the model, we need to postulate boundary conditions for chemical concentrations.
It should be emphasized that the boundary conditions are the integral part of the solution, that 
plays an important part in the way the dynamics proceeds. According the experimental set up the chemicals can diffuse from the domain of  a cell sample, the intensity of this ``leaking'' being proportional to the difference between the boundary concentration of the chemical and the ``background'' concentration. For BMP4 it is reasonable to take the background concentration at the fixed level $\bar{u}$ that equals to the initial concentration of BMP4. 
For other chemicals in question, the background concentration is zero. This is so-called ``Newton's law of cooling.'' It is expressed as
\[
\frac{\partial u}{\partial n}{}={}h_u(\bar{u}-u),\quad h_u>0,
\]
where $n$ is the external, unit normal vector at $\partial \Omega.$ Similarly, for the inhibitor,
\[
\frac{\partial v}{\partial n}{}={}-h_v v,\quad h_v>0.
\]

We note that an earlier works on mathematical modeling of hESC development such as \cite{Tewary},  used {\it ad hoc} boundary conditions, not consistent with the experimental setting. An alternative way to deal with the boundary, is to embed the reaction domain into a larger domain where chemicals are only diffused, and postulate, for example, no-flux boundary conditions on the larger domain, as was done by in \cite{Chhabra}. The reason being that, the precise form of the boundary conditions on 
the large domain should have minimal effect on the domain where the reaction takes place.

\section{Numerical simulations}
Simulations are performed for RD systems written in scaled variables, using the values of coefficients of the magnitude typically occurred in experimental studies, see Appendix for details. In particular, the computational domain is a disk of radius $1,$ and time $t$ is measured in days. 
The values of parameters for system \eqref{eq:u}, \eqref{eq:v} for BMP4/Noggin, and system \eqref{eq:u2}, \eqref{eq:v2} for Wnt/DKK and Nodal/Lefy, used in the simulations, are listed in table \ref{tab:1}.
\begin{table}
\begin{tabular}{@{} lccc  @{}}    \toprule
& bmp4/noggin & wnt/dkk & nodal/lefty   \\\midrule
$a$ & 77.76  & 77.76  & 31.104    \\ 
$b$ & 77.76 & 194.4 & 77.76 \\ 
 $c$ & 77.76 & 194.4 & 77.76 \\
 $d$ & 77.76 &  97.2   &  38.88     \\
 $\mu_u$ &3.8 & 3.8 & 3.8 \\
 $\mu_v$  &19 & 19  &  19 \\ 
 $h_u$     &172.8& 172.8&172.8\\
 $h_v$     &172.8&172.8&172.8\\  \bottomrule
 \\
\end{tabular}
\caption{Dimensionless reaction and diffusion parameters for the activator-inhibitor RD systems \eqref{eq:u}, \eqref{eq:v} and \eqref{eq:u2}, \eqref{eq:v2}, used in numerical simulations.}
\label{tab:1}
\end{table}
 The parameters are selected is such a way that RD system for BMP4 is of type 1, and the systems for Wnt and Nodal are of type 2. Reaction coefficients for Nodal system differ by a factor of $0.4$ from the corresponding coefficients for Wnt system, which means that the phase portraits for the reaction dynamics are identical in both  cases.
 
The source term $f(x)$ in \eqref{eq:u2} is set to be supported near the boundary of the disk:
\[
f(x) = \left\{
\begin{array}{ll}
670 & |x|>0.85,\\
0 & |x|<0.85.
\end{array}
\right.
\]
Finally, the background state $\bar{u}$ is set to 3 for BMP4, as well as the initial values for BMP4. All other chemicals have zero initial values, and zero background states. 

We will show below that behaviors 1.-4., listed in the Introduction, are captured by the RD models described here. The exact timing of different phenomena described below does not necessarily correspond to experimentally observed values. That would require more precise estimation of the parameters of the model. Our main goal is to establish that qualitatively correct behavior is produced by the model.
\subsection{Terminal concentrations of proteins}
The numerical simulations show that concentrations of all three proteins approach a radial, steady state profiles  by $t=1$ day.  Figure \ref{fig:2d3d} shows 2d and 3d plots of  concentrations of the proteins at $t=3$ days.
BMP4 is concentrated at  the boundary of the domain. Wnt peaks in the middle section, but takes comparable values at the center, and Nodal peaks at the center but somewhat extends to the middle section of the domain.

The appearance of steady states was identified when the change between the successive iterations of the numerical solution became less than $10^{-6}$ units, over a period of time of 1 day.

To illustrate the difference between three different diffusion-reaction regimes we map the radial cut of each protein and its inhibitor in the phase plane on Figure \ref{fig:phase_plane}. Point $B$ indicates the values at the boundary of the disk and point $C$ represents the values at the center.  The plots also show the stable fixed point, $S,$ for the corresponding reaction dynamics.
 BMP4 starts at non-zero value at the boundary due to the influence from non-zero background state $\bar{b}$ and then moves towards zero, according to the reaction, as one moves to the center of the disk. Point $S$ is a stable focus for Wnt/DKK and Nodal/Lefty pairs, however the reaction coefficients are stronger in Wnt system than in Nodal system. This results in the radial profile of Wnt/DKK  being ``bent'' in the direction of the reaction. This property results in maximum of Wnt to be located in the middle of the disk, whereas Nodal has maximum at the center.

\subsection{Signaling waves of Wnt and Nodal}
Figure \ref{fig:T_cuts} shows the evolution of concentrations of Wnt and Nodal as functions of time.
In both cases, the concentration first increase near the boundary, where $f(x)$ is supported and then they move toward the center. This behavior corresponds to the ``signaling waves'' of the proteins described in Introduction.

\subsection{Effect of inhibition of BMP4}
Figure \ref{fig:t_cuts} shows the effect of inhibition of BMP4 ($f(x)=0$) at time indicated by the variable $tcut$ on the shape of the final concentration at time $t=3$ days. 

The simulations show that there is a critical time $t_0$ with the property that if BMP4 is inhibited prior to $t_0$ the system converges to zero steady-state, but when BMP4 is inhibited after $t_0$ the system proceeds autonomously to a non-homogeneous, non-zero steady state. For Wnt dynamics $t_0$ is estimated to be between $0.001$ and $0.005$ days, and for Nodal, between $0.005$ and $0.01$ days, the difference is due to the difference in the magnitude of the reaction coefficients.

\subsection{Dependence on parameters}
\label{sec:3.4}
For small variations of parameters given in Table \ref{tab:1} the numerical simulation produce qualitatively similar results, indicating that the problems are stable. That is, the terminal steady state concentrations are stable. This property is lost when the larger variations. We performed the numerical simulation
of BMP/Noggin dynamics with large gap in diffusion coefficients, by selecting $\mu_u = 1\,\mu m^2/sec$ and $\mu_v = 55\,\mu m^2/sec,$
(instead of $\mu_u=11\,\mu m^2/sec,$ and $\mu_v = 55\,\mu m^2/sec$ used previously), while keeping all other parameters. 

Figure \ref{fig:t2d} shows 2d and 3d plots of  non-radial profile of BMP4 at time $t=3$ days. The non-radial perturbations start to develop from a radially symmetric profile at the time about $t=0.2$ days. Note that,
due to the radial symmetry of equations, the problems has a unique, classical, radially symmetric solution if the initial data have this property, but the numerical solution deviate from it significantly.

A possible explanation of this phenomenon is that the problem has an unstable radially symmetric steady state to which the system moves from its initial values. Small deviations from radially symmetry due the numerical approximation lead to the growth of perturbations shown on the figure. That is, this is the case of Turing instability.

Interestingly, the instabilities appear to be restricted to the boundary, and further simulations (not shown here) produce a different number of peaks, with further variations in the diffusion coefficients. This non-homogeneous profile might, in principle, be associated with the formation of the outer ring of germs in a cell colony. Further investigation of a coupled BMP4-Wnt-Nodal system is needed to clarify if this behavior bears some significance in actual biological processes.

\begin{figure}[h]
\centering
\includegraphics[scale=0.55]{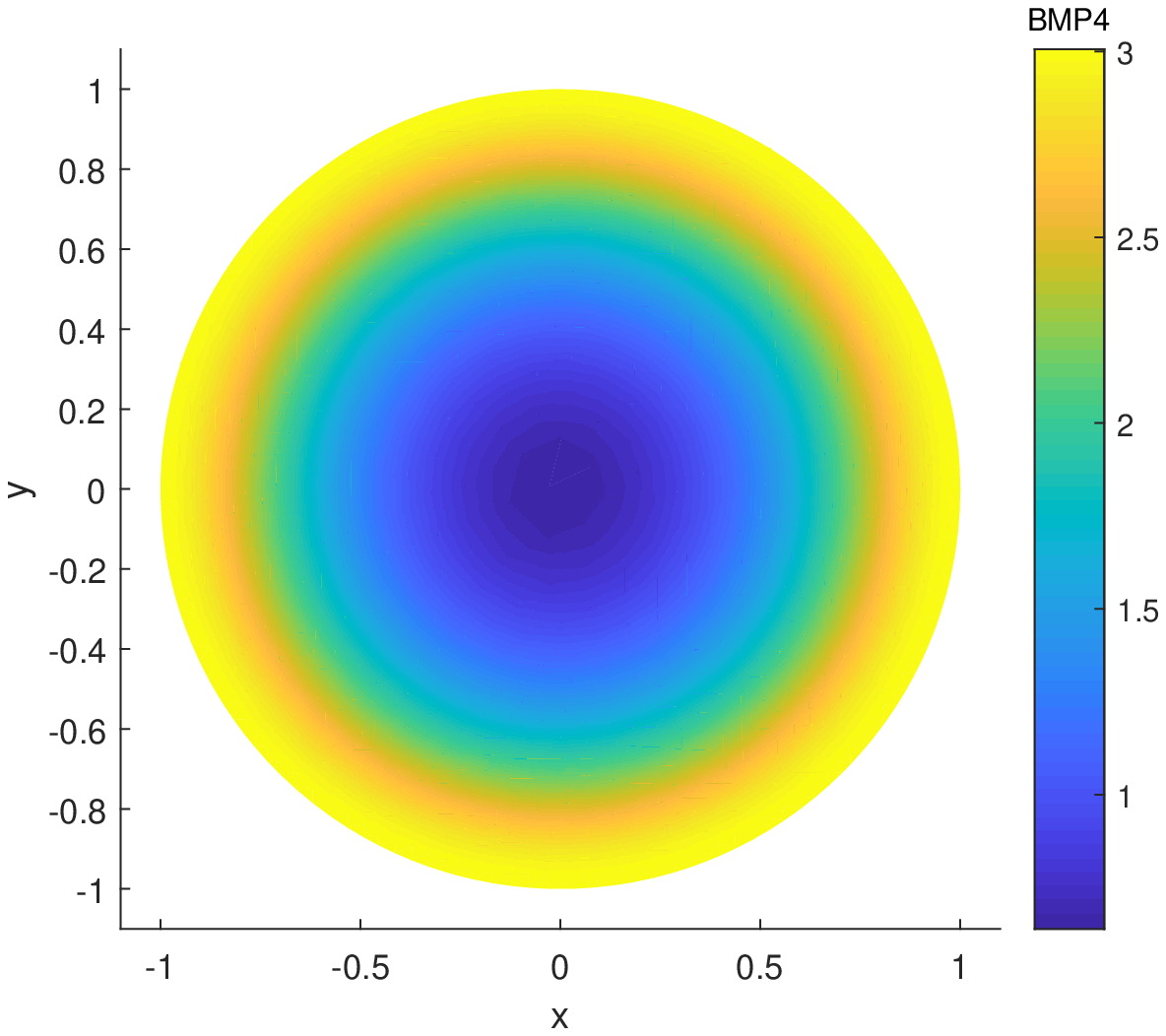}
\includegraphics[scale=0.55]{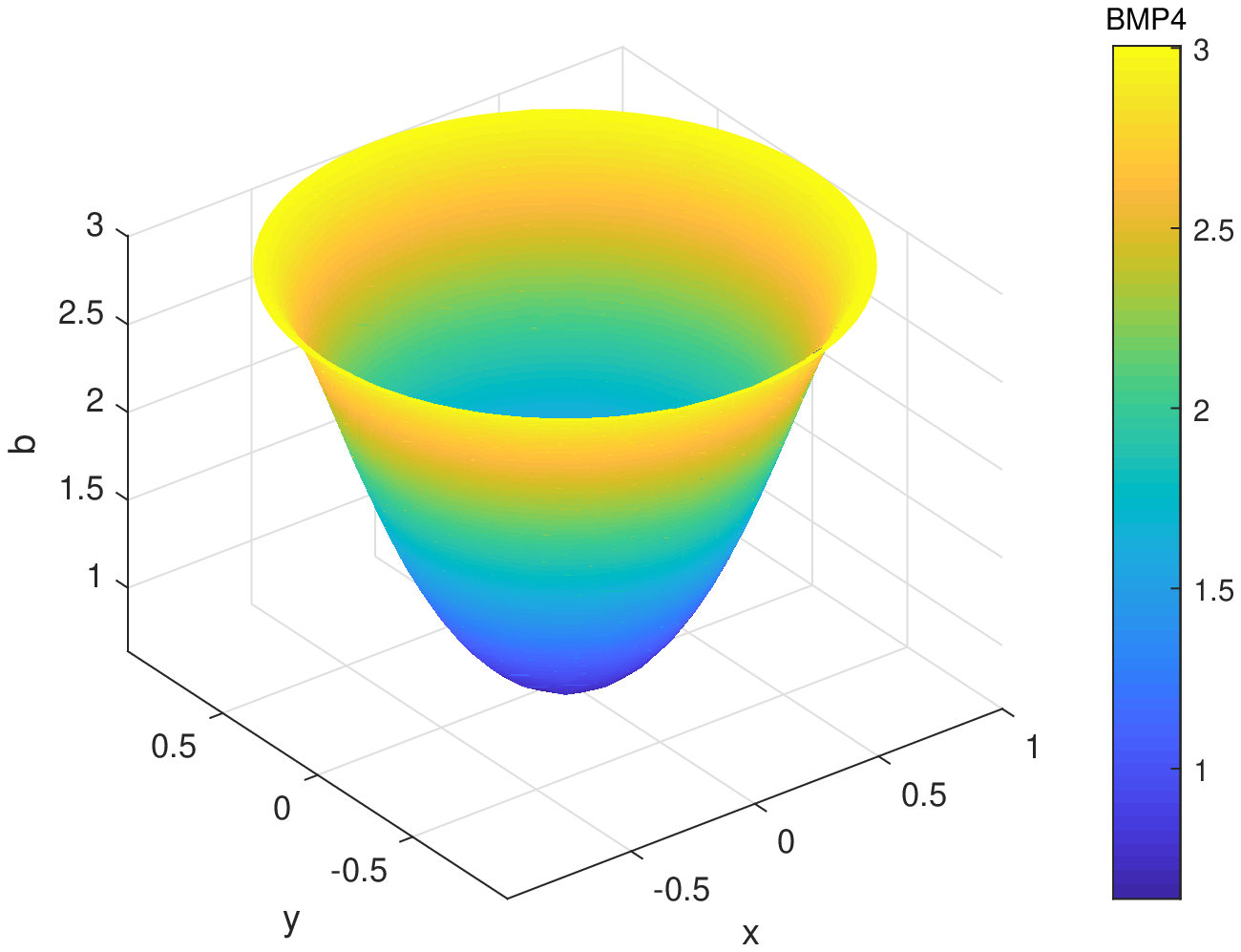}\\
\includegraphics[scale=0.55]{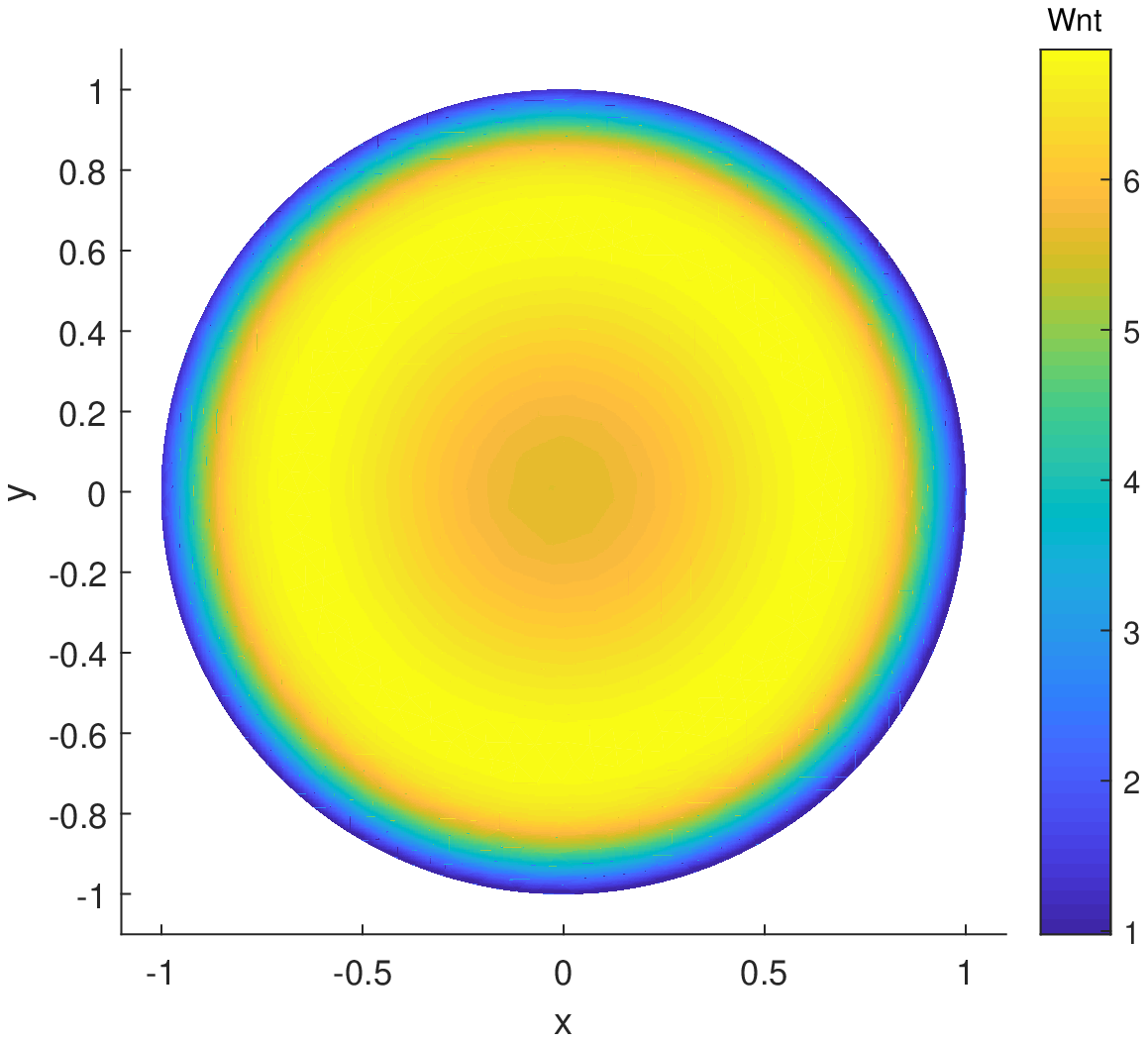}
\includegraphics[scale=0.55]{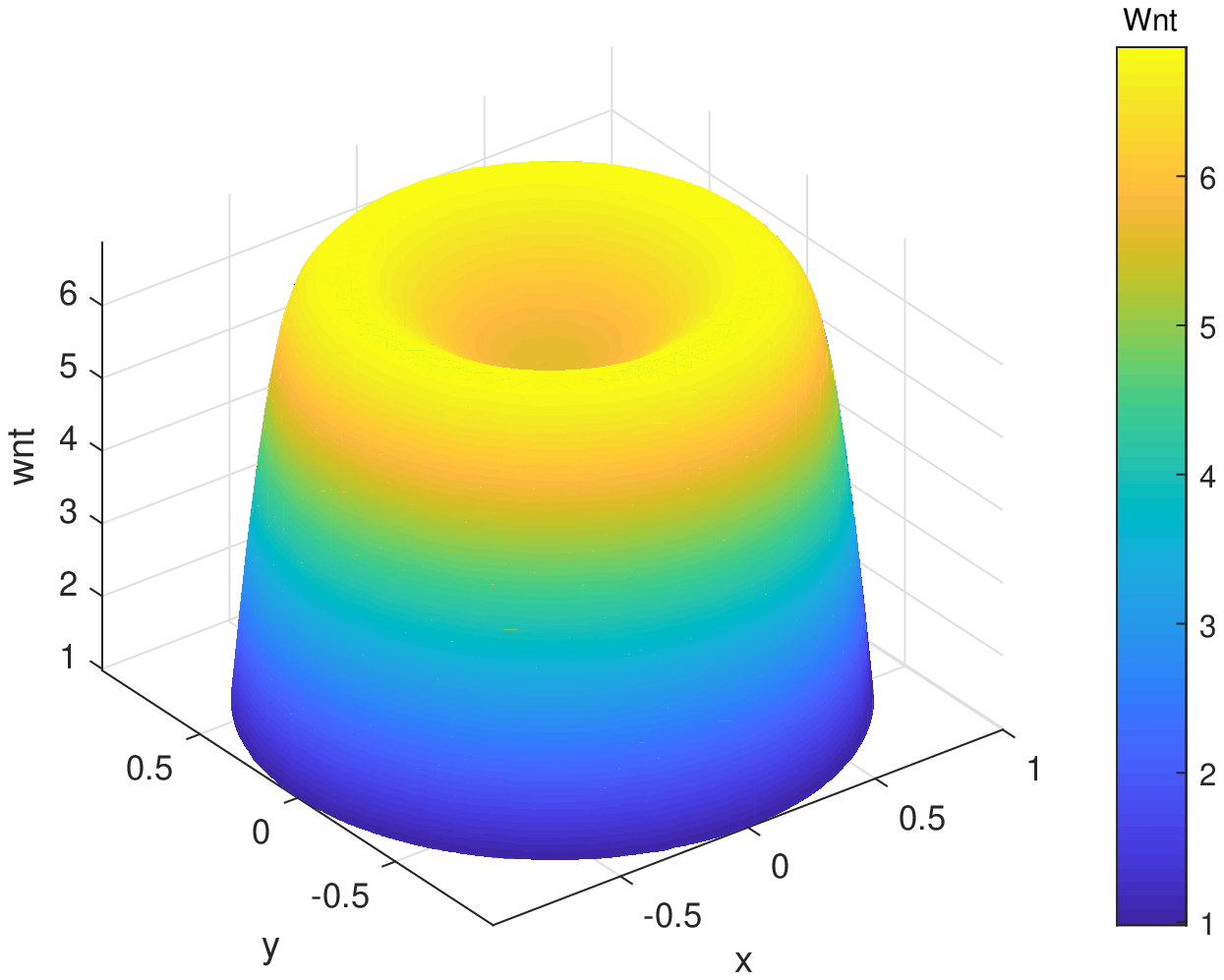}\\
\includegraphics[scale=0.55]{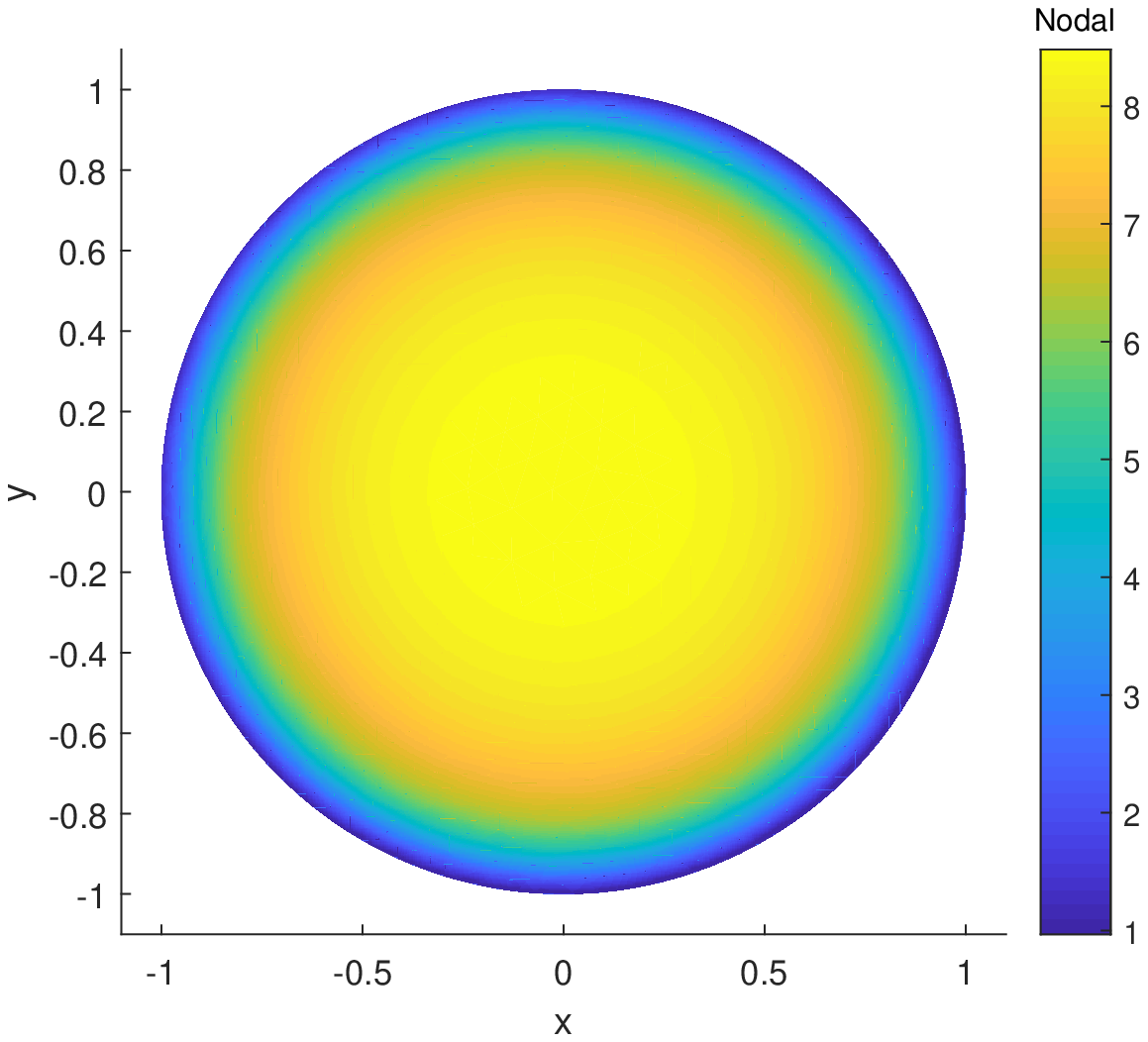}
\includegraphics[scale=0.55]{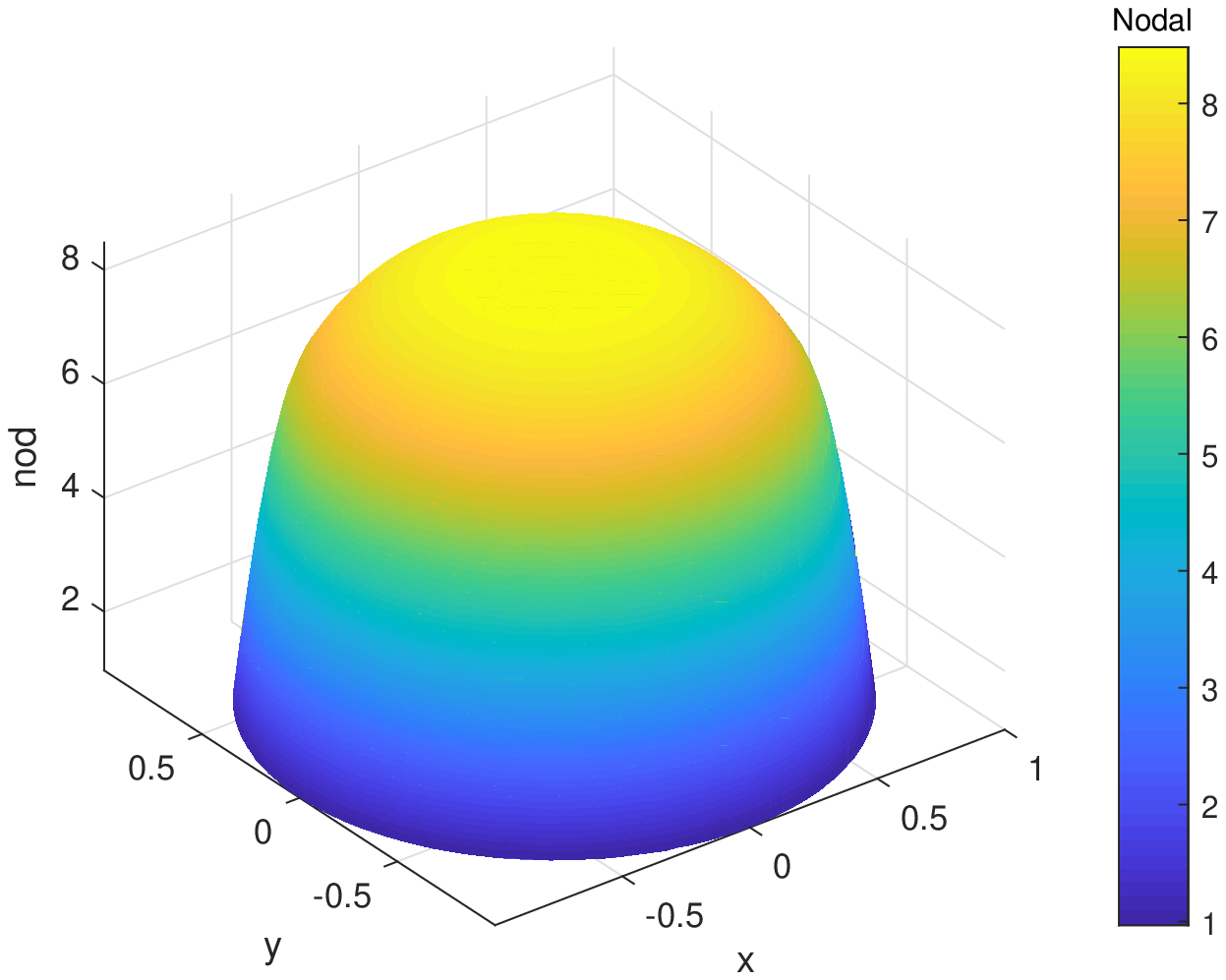}
\caption{Steady state concentrations of BMP4, Wnt and Nodal at time $t=3$ days.}
\label{fig:2d3d}
\end{figure}

\begin{figure}[t]
\centerline{\includegraphics[scale=0.5]{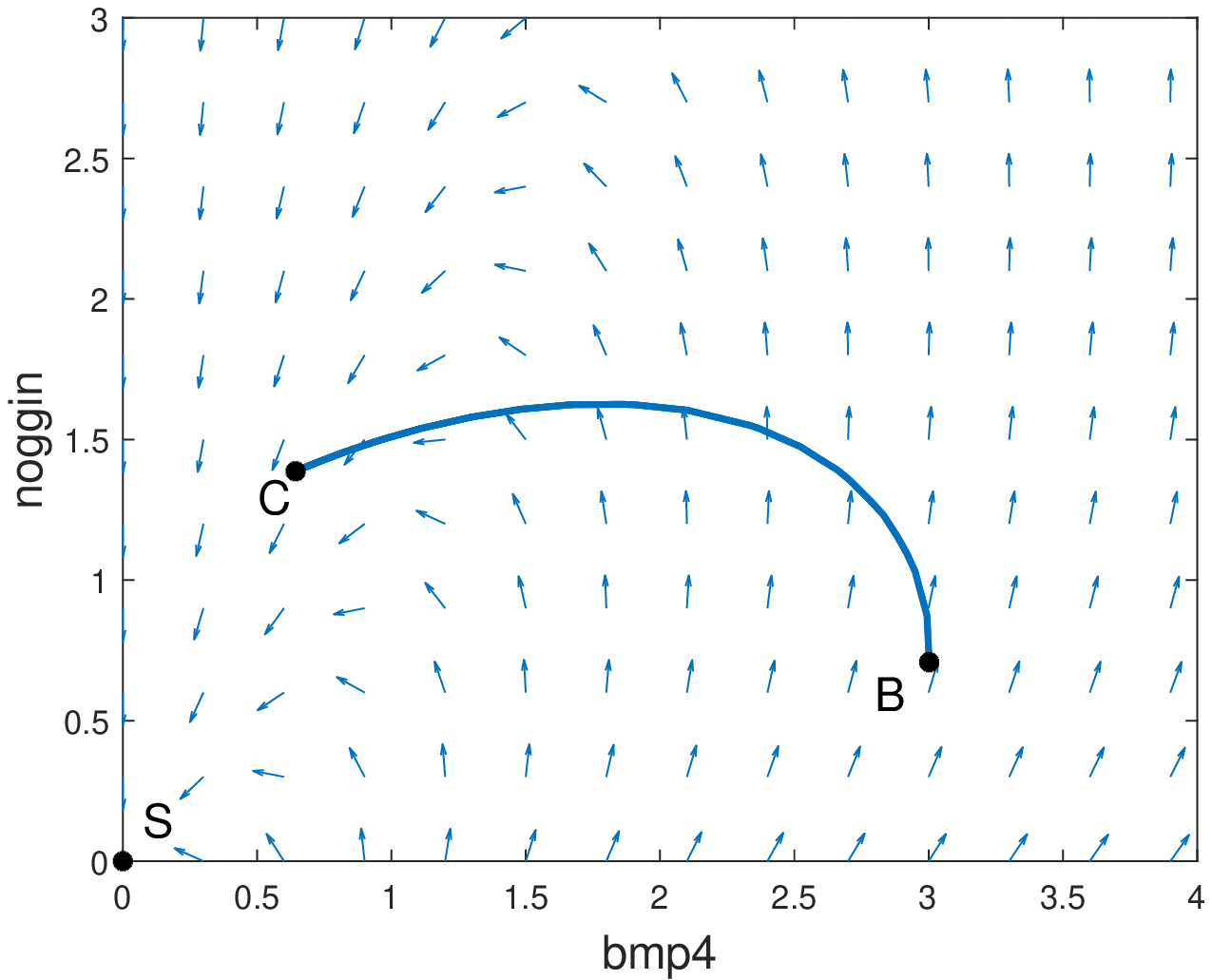}
\includegraphics[scale=0.5]{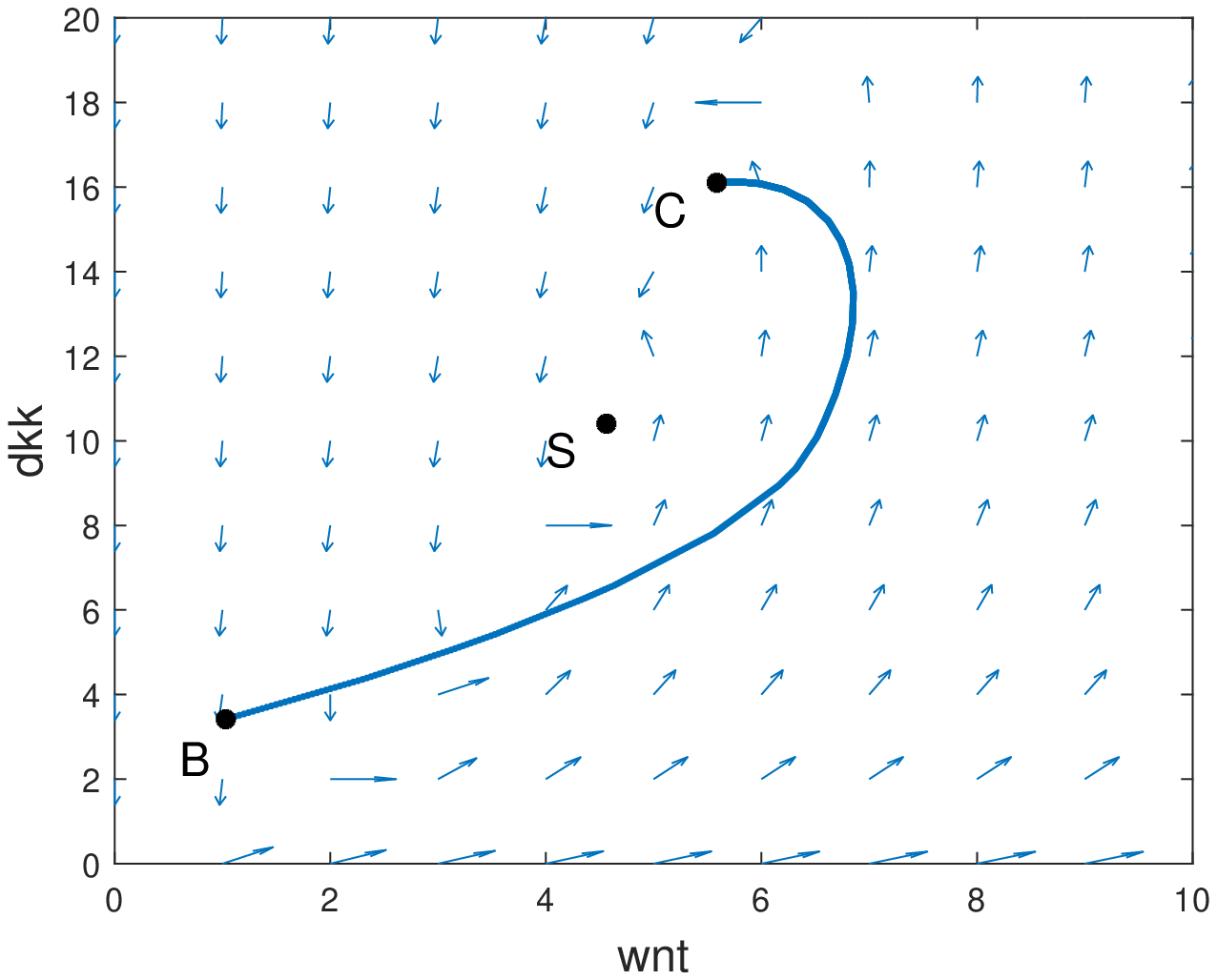}}
\centerline{\includegraphics[scale=0.5]{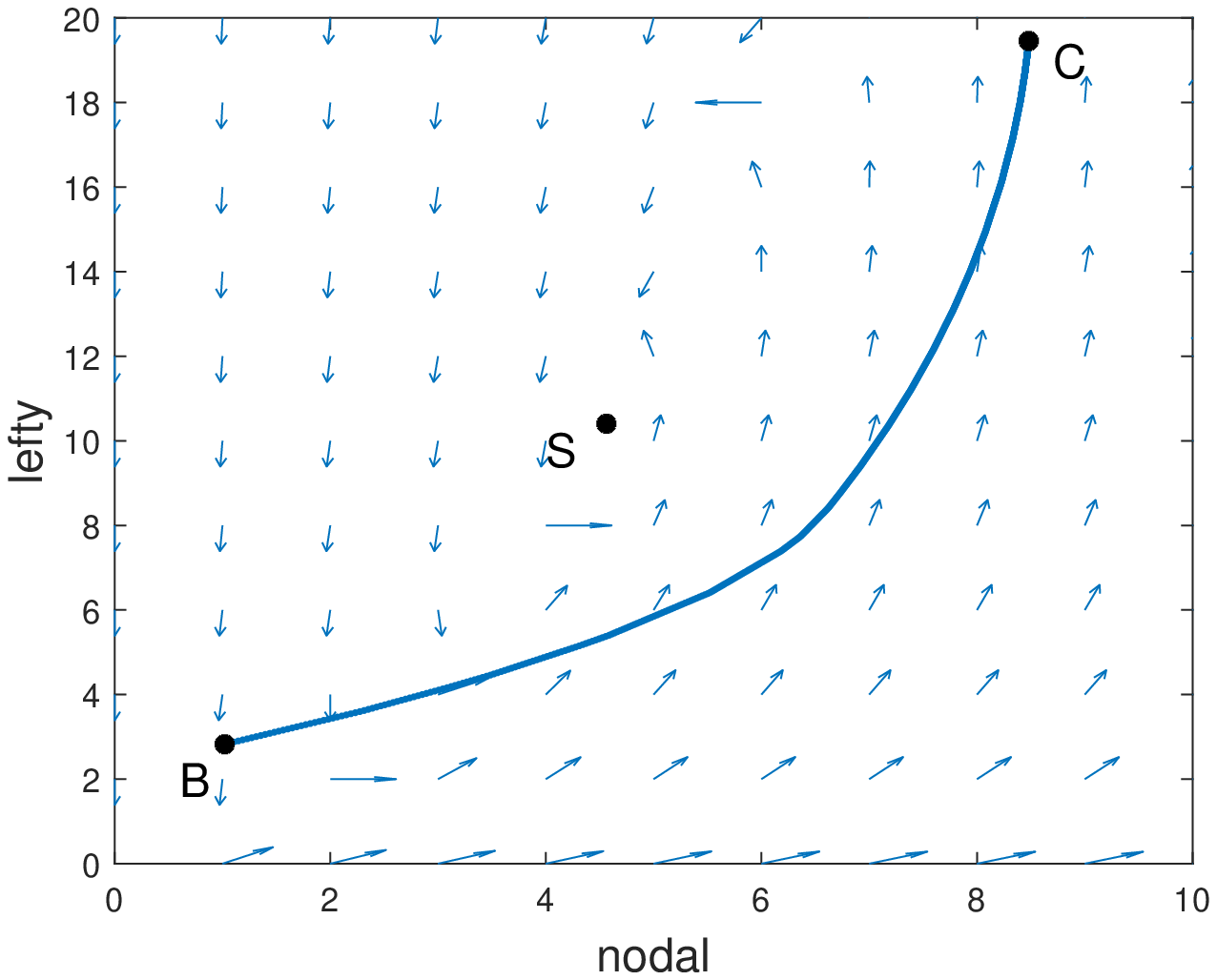}}
\caption{Radial sections of three pairs of activator-inhibitors in the phase plane. The plots show the values of bmp4/noggin, wnt/dkk, and nodal/lefty from the boundary $B$ to the center of the colony $C,$
when the concentrations reach steady states, at $t=3$ days. On the top plot, $S$ is the stable node for the reaction dynamics. On the middle and bottom plots, $S$ is a stable focus with counterclockwise rotation. Plots also show velocity fields of each reaction system.}
\label{fig:phase_plane}
\end{figure}

\begin{figure}[t]
\centerline{\includegraphics[scale=0.55]{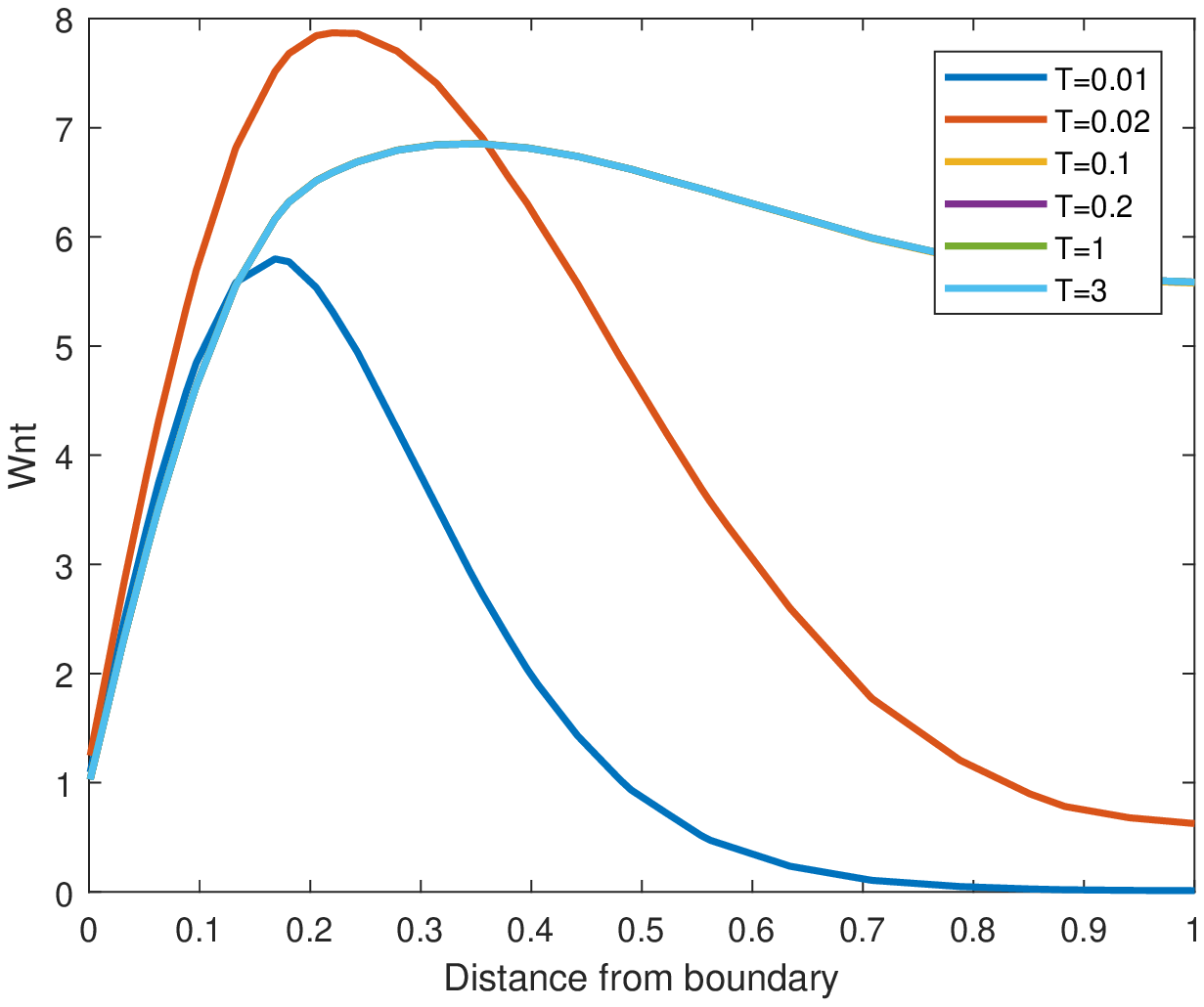}
\includegraphics[scale=0.55]{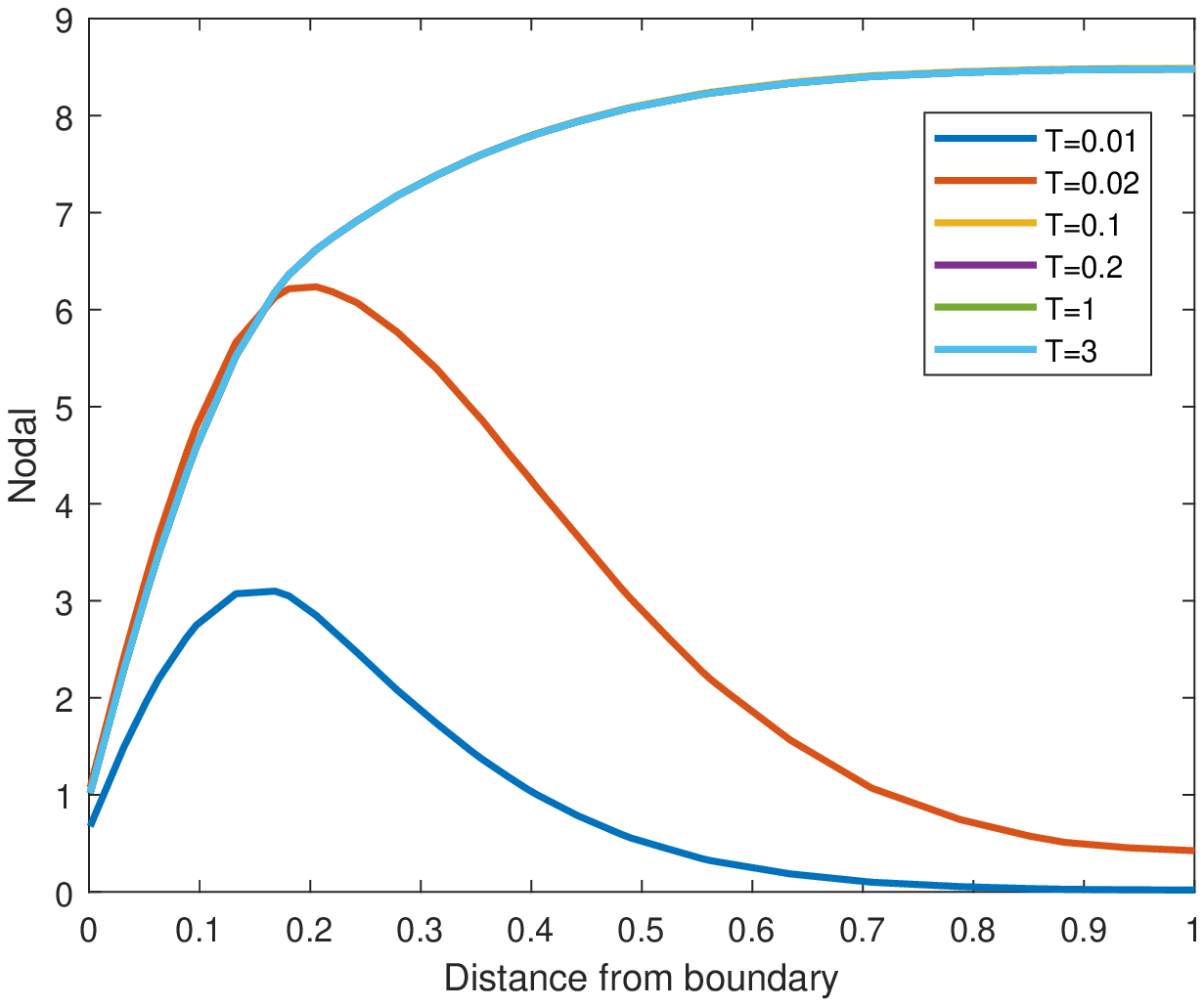}}
\caption{Signaling waves of Wnt and Nodal. Plots represent radial profile of the proteins at increasing moments of time $T.$ Initial concentrations are zeros for both proteins.}
\label{fig:T_cuts}
\end{figure}

\begin{figure}[t]
\centerline{\includegraphics[scale=0.55]{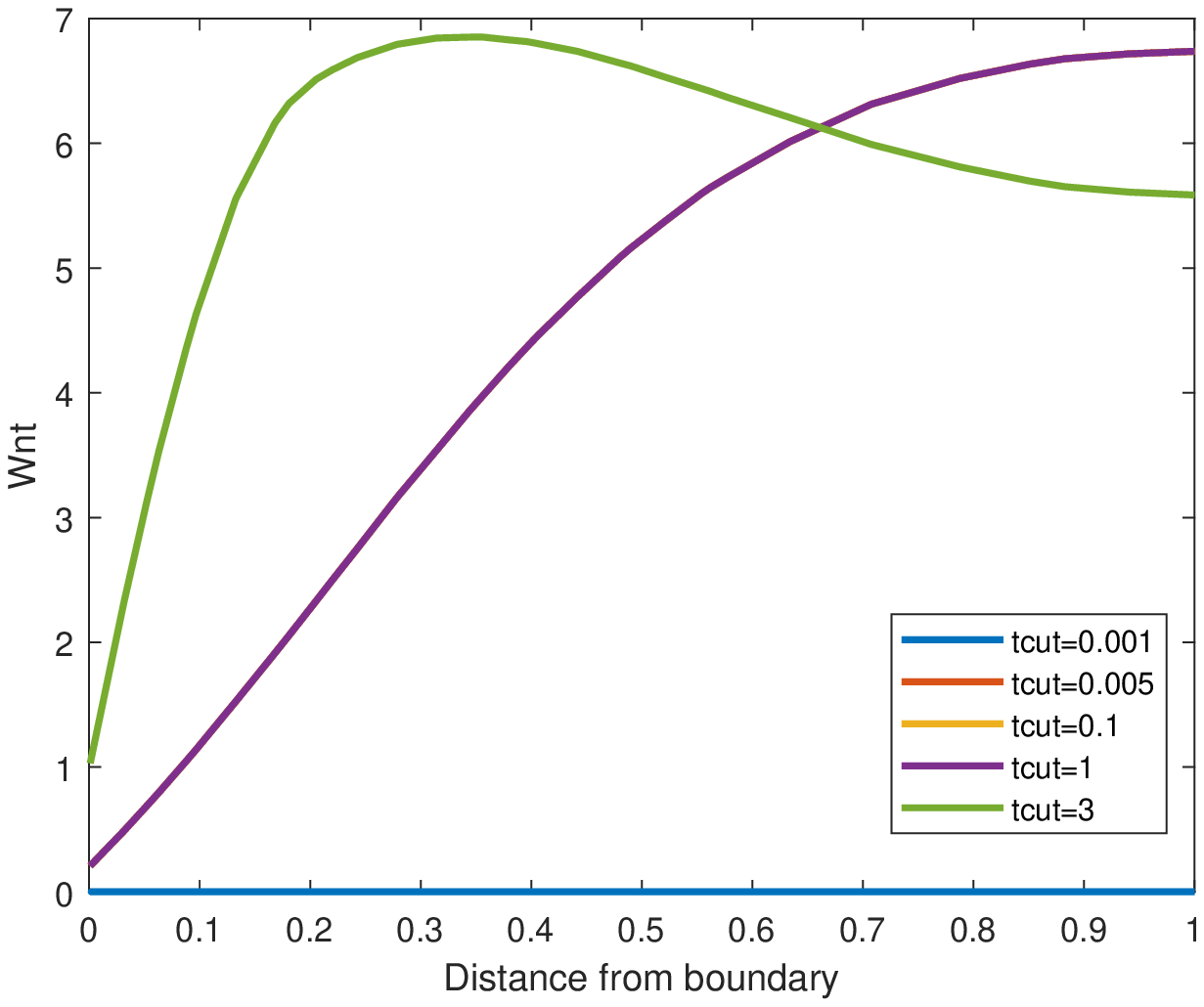}
\includegraphics[scale=0.55]{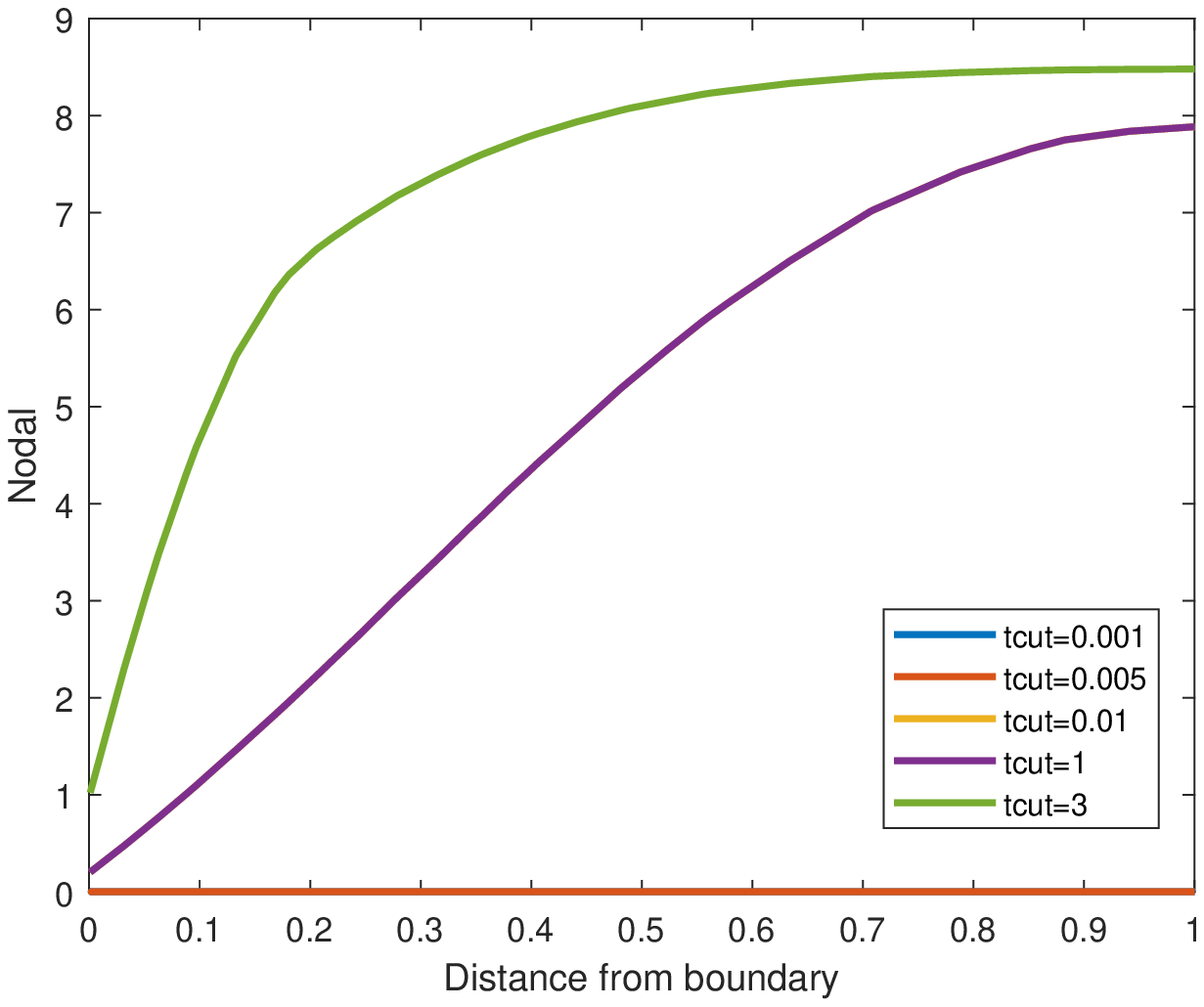}}
\caption{Effect of BMP inhibition at time $tcut$ on the terminal concentrations of Wnt and Nodal. Concentrations with $tcut=3$ correspond to no inhibition of BMP4. Plots show switching from a zero steady state to a non-zero state, when the activation time of BMP4 exceeds certain threshold value, but it is inhibited afterwards.}
\label{fig:t_cuts}
\end{figure}

\begin{figure}[H]
\centering
\includegraphics[scale=0.55]{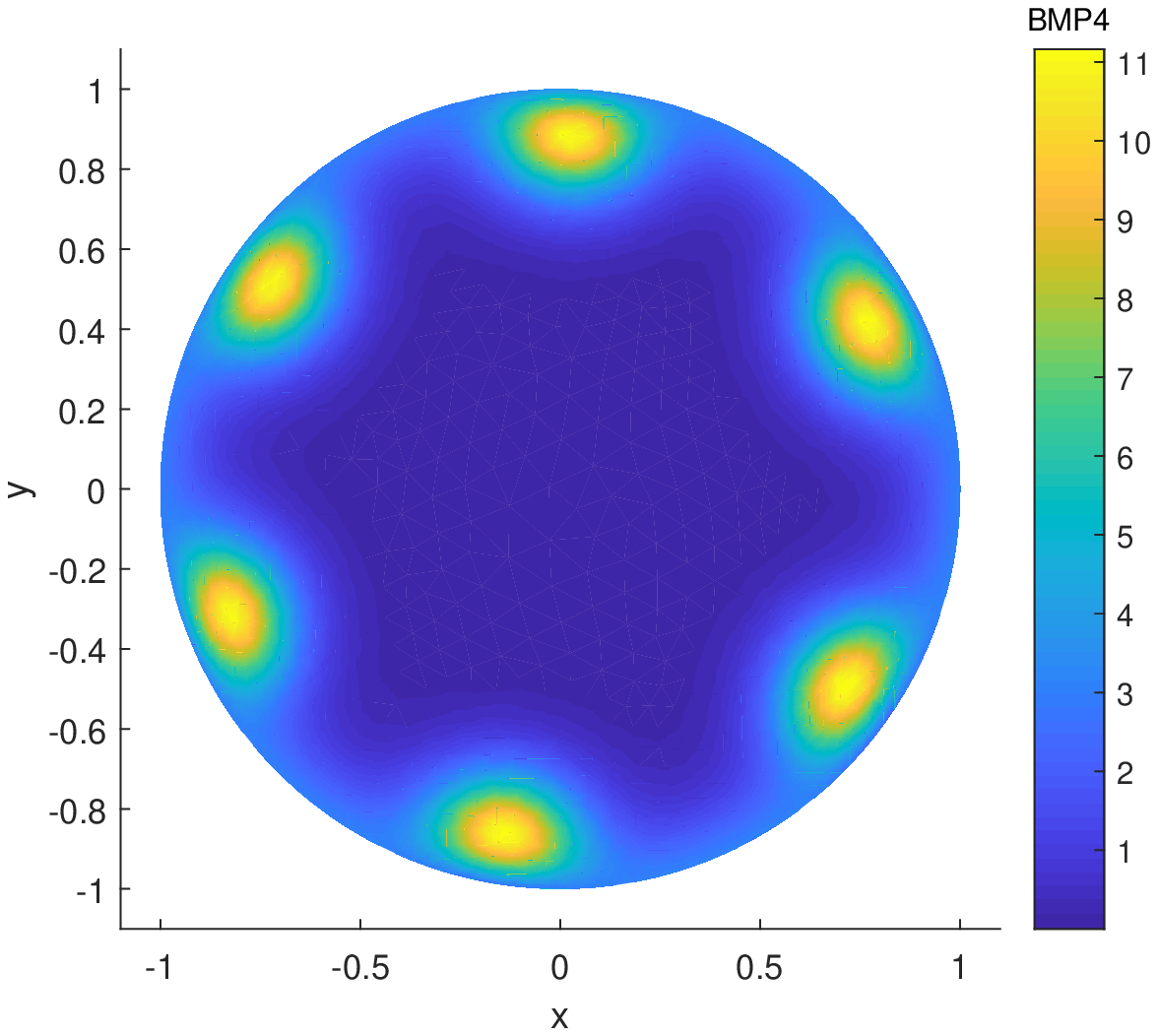}
\includegraphics[scale=0.55]{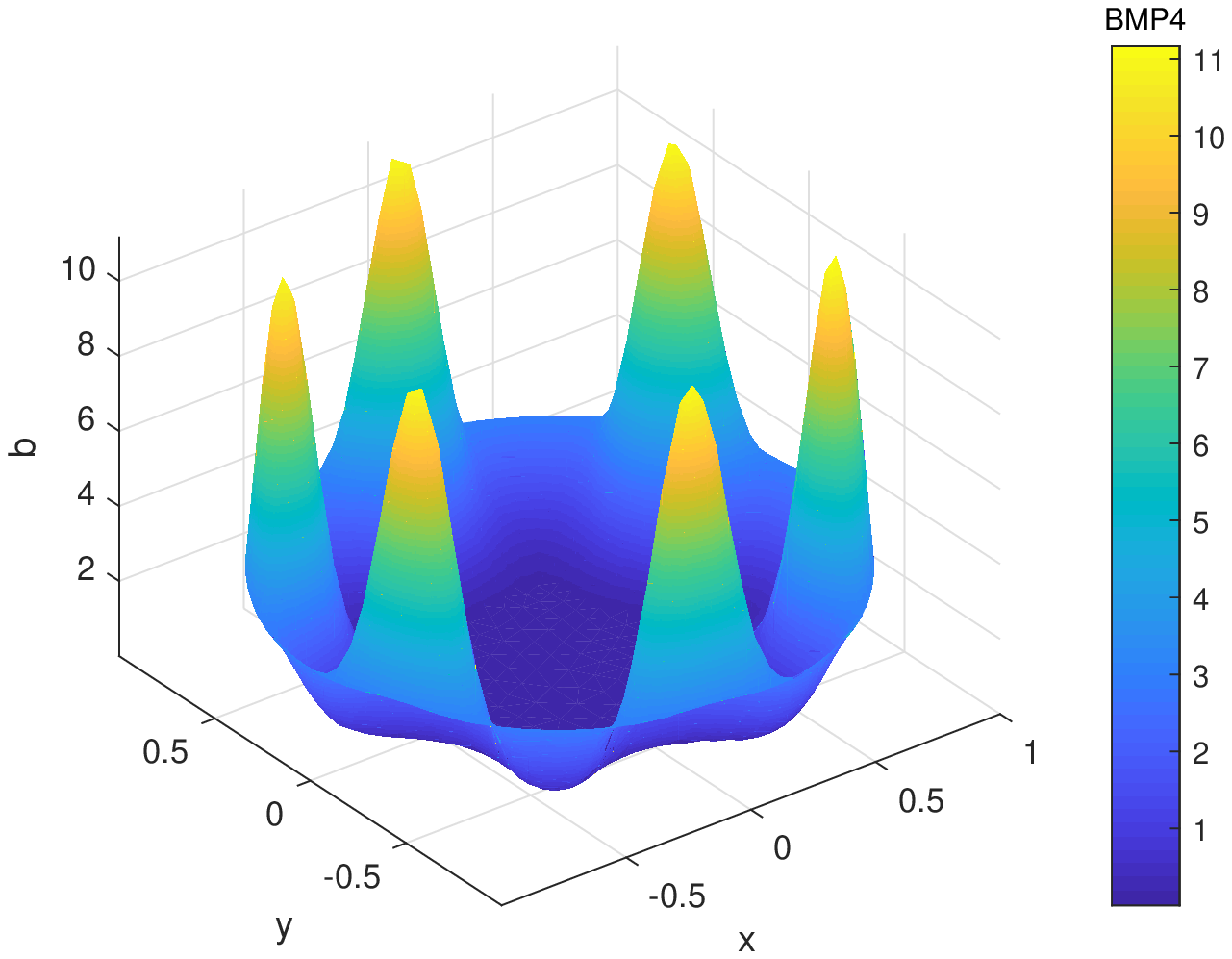}
\caption{Instabilities in BMP4/Noggin dynamics. The figure shows 2d and 3d plots of the result of the numerical simulation of concentration of BMP4 at $t=3$ for the diffusion coefficients $\mu_{bmp}=1$ and $\mu_{nog}=55$ and the colony size of $500\mu m.$}
\label{fig:t2d}
\end{figure}

\section{Existence  and stability of steady-state solutions}
One of the main features of the models considered in this paper and experimental works cited in the introduction is the formation non-homogeneous steady-states of the chemical concentrations. Moreover, this steady-states appear to be stable, as none of the Turing-type instabilities is observed in the experimental setting, see Chhabra et al \cite{Chhabra}.
In this section we address the question of existence of stable steady-state solutions. The results that we prove below, apply to RD systems in the form:
\begin{eqnarray}
\label{eq:u1}
\partial_t u{}-{}\mu\Delta u{}&=&{}-au {}+{}\frac{bu^2}{1+v}{}+{}f(x),\\
\label{eq:v1}
\partial_t v{}-{}\mu\Delta v{}&=&{}-cv{}+{}du^2{}+{}g(x),
\end{eqnarray}
with the Direchlet boundary conditions 
\begin{equation}
\label{bc:exist}
u{}={}u_b(x),\quad v{}={}v_b(x),\quad (x,t)\in\partial\Omega\times[0,+\infty),
\end{equation}
This can be seen as a limiting case of the boundary conditions from earlier sections, when the rates of cooling $h_u,\,h_v\to+\infty,$ i.e., there is high rate of transfer of chemicals to or from the background state. We will assume that decay and reaction coefficients are positive.

We would like to compare the method we use with the well-known method of invariant regions by Chueh et al. \cite{ChCS}, for establishing time asymptotic behavior of  solutions of reaction-diffusion systems.  The latter method applies to reaction-diffusion systems in the form
\begin{equation}
\label{eq:inv}
\frac{\partial u}{\partial t}{}-{}D\Delta u{}={}F(u,t),
\end{equation}
where $u\in \mathbb{R}^n,$ $(x,t)\in \Omega\times [0,+\infty),$ $D$ is $n\times n,$ diagonal matrix with non-negative entries,  and the vector source term  $F\in\mathbb{R}^n.$
The system is supplied with the zero-flux boundary conditions
\[
\frac{\partial u}{\partial n}{}={}0,\quad (x,t)\in\partial\Omega\times[0,+\infty).
\]
For the method to work the must have bounded invariant regions (see below) in order to establish  bounds on supremum norm of $|u(x,t)|.$ 
Next,  the method relies on the fact  that time asymptotic behavior of solutions of \eqref{eq:inv} can be compared with the solution of the system of ODEs:
\[
\frac{du}{dt}{}={}F(u,t),
\]
meaning that the limiting behavior of \eqref{eq:inv} is a homogeneous (constant in $x$) state. 

If we look at the system \eqref{eq:u1}, \eqref{eq:v1}, we see that the right-hand side  explicitly depends on $x$ through functions $f(x)$ and $g(x).$ The boundary conditions \eqref{bc:exist} differ as well. Thus, in general,  the steady-states  of the problem \eqref{eq:u1}, \eqref{eq:v1} and \eqref{bc:exist} are  non homogeneous (non-constant).

We show now that system \eqref{eq:u1}, \eqref{eq:v1} does not have invariant regions either, so that the uniform estimates must be obtained by other means.
 
A closed set $S\subset \mathbb{R}^n$ is called an invariant set for \eqref{eq:inv} if for any $t>0,\,x\in\Omega$ the solution $u(x,t)\in S$ whenever the initial data $u_0(x)\in S,$ for every $x\in\Omega.$

Suppose that $S$ can be written as an intersection of ``half-spaces'':
\[
S{}={}\cap_{i=1}^m \left\{ G_i(u){}\leq {}0\right\}
\]
where $G_i$ are smooth functions.
Theorem 14.14 from \cite{Smoller} gives a sufficient and necessary conditions for $S$ be an invariant regions. 
\begin{theorem*}
$S$ is an invariant region for \eqref{eq:inv}, if and only if  for every $u\in\partial S,$ (so that $G_i(u){}={}0$, for some $i,$):
\begin{enumerate}
\item $\nabla G_i(u)$ is the left eigenvector of $D;$
\item $G_i$ is quasi-convex at $u;$
\item $\nabla G_i(u)\cdot F(u,t)\leq 0.$
\end{enumerate}
\end{theorem*}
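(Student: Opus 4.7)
The strategy is to prove sufficiency and necessity separately. Sufficiency rests on a scalar maximum-principle argument applied to the auxiliary functions $\phi_i(x,t) := G_i(u(x,t))$, while necessity is established contrapositively by constructing short-time counter-examples whenever one of the three conditions fails at a boundary point $u^\ast \in \partial S$.

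For sufficiency, the first step is to derive the evolution equation for $\phi_i$. Using $\partial_t u = D\Delta u + F(u,t)$ and the chain rule, together with the identity
\[
\Delta \phi_i = \sum_k (\partial_{x_k} u)^\top \nabla^2 G_i(u)\,\partial_{x_k} u + \nabla G_i(u)\cdot \Delta u,
\]
I obtain
\[
\partial_t \phi_i - \lambda_i \Delta \phi_i = \nabla G_i(u)\cdot F(u,t) - \lambda_i \sum_{k}(\partial_{x_k} u)^\top \nabla^2 G_i(u)\,\partial_{x_k} u,
\]
where $\lambda_i \ge 0$ is the eigenvalue of $D$ for which $\nabla G_i(u)$ is a left eigenvector. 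Condition (1) is precisely what allows this collapse: it makes the off-eigenvector contribution $\nabla G_i \cdot (D - \lambda_i I)\Delta u$ vanish, turning the coupled diffusion into a genuine scalar parabolic operator acting on $\phi_i$.

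The second step is to argue that $\phi_i \le 0$ is preserved. At a first hypothetical time $t_0 > 0$ and point $x_0$ where $\phi_i(x_0,t_0) = 0$ but $\phi_i \le 0$ nearby, one has $\nabla_x \phi_i(x_0,t_0) = 0$, i.e.\ $\nabla G_i(u) \cdot \partial_{x_k} u = 0$ for every $k$, so each $\partial_{x_k} u$ is tangent to the level set $\{G_i = 0\}$. Quasi-convexity at $u$ (condition 2) then ensures $(\partial_{x_k} u)^\top \nabla^2 G_i(u)\,\partial_{x_k} u \ge 0$, so the Hessian contribution has a favorable sign; combined with $\nabla G_i \cdot F \le 0$ from condition (3), the right-hand side of the displayed equation is non-positive at $(x_0,t_0)$. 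A standard weak maximum principle for $\phi_i + \varepsilon t$, followed by sending $\varepsilon \to 0^+$, yields $\phi_i \le 0$ throughout $\Omega\times[0,\infty)$. The zero-flux condition $\partial u/\partial n = 0$ forces $\partial \phi_i/\partial n = 0$, so Hopf's lemma rules out boundary maxima; intersecting over $i$ gives invariance of $S$.

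For necessity, I would construct, for each failing condition, an initial datum $u_0$ valued in $S$ whose local-in-time evolution exits $S$. Failure of (3) is detected by constant-in-$x$ data, reducing to the ODE $u' = F$ whose trajectory crosses $\{G_i = 0\}$ outward. Failure of (2) is exposed by a profile with gradient tangent to $\{G_i = 0\}$ but with a negative and dominant Hessian contribution in the displayed equation. Failure of (1) is the most delicate: one selects $u_0$ so that $\Delta u_0$ has a component outside the left-eigenspace of $D$ containing $\nabla G_i$, making $\nabla G_i \cdot D\Delta u_0$ positive while $\nabla G_i \cdot F$ and the Hessian term are negligible. The main obstacle is precisely this last construction — ensuring that the formal obstruction persists through the short-time Taylor expansion of the actual PDE solution rather than being swamped by lower-order effects — together with the need to handle non-strict inequalities throughout the maximum-principle step, both of which are standard but require careful bookkeeping.
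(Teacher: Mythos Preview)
The paper does not prove this theorem: it is quoted as Theorem~14.14 from Smoller~\cite{Smoller} and used only as a black box to argue that the system \eqref{eq:u1}--\eqref{eq:v1} admits no rectangular invariant region. There is therefore no proof in the paper to compare against.

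That said, your sketch is the standard Chueh--Conley--Smoller argument and is broadly on target. One point deserves tightening. Your displayed identity for $\partial_t\phi_i - \lambda_i\Delta\phi_i$ is valid only at points where $\nabla G_i(u)$ is actually a left eigenvector of $D$, i.e.\ where $u(x,t)\in\partial S$; it is not a global scalar parabolic equation for $\phi_i$ on $Q_T$. Consequently you cannot apply a weak maximum principle to $\phi_i+\varepsilon t$ over the whole cylinder as you suggest. The argument has to remain strictly local: work at the first touching time and point $(x_0,t_0)$, where $G_i(u)=0$ so that all three hypotheses are available, and deduce $\partial_t\phi_i(x_0,t_0)\le 0$ there (handling the non-strict case by perturbing $F$ or the region, as in Smoller's treatment). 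Your write-up mixes the global and the local pictures; commit to the local one. The necessity constructions you outline are also the classical ones, and your identification of condition~(1) as the delicate case is accurate.
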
 
Recall that function $G_i(u)$ is called a quasi-convex function at $u,$ if for any vector $v$ such that $\nabla G_i(u)\cdot v{}={}0,$ we have $\langle \grad^2 G_i(u)v,v\rangle\geq 0.$

Consider now system \eqref{eq:u1}, \eqref{eq:v1}. In this case $D$ is a diagonal matrix $\mu\mathbb{I},$ where $\mathbb{I}$ is $2\times2$ identity matrix. The first condition of the theorem implies that $\nabla G_i(u)$ is proportional to vector $(1,0)$ or $(0,1),$ that is, the level sets of $G_i$ are either horizontal or vertical lines in $(u,v)$ plane. Thus, an invariant region, if it exists, is a rectangle.  The third condition of the theorem then implies that on a line $\{ u = const.\},$ we must have
\[
-cv{}+{}du^2{}\leq{}-g(x){}\leq0,
\]
and on a line $\{v= const.\},$
\[
-au{}+{}\frac{bu^2}{1+v}{}\leq{}-f(x){}\leq 0.
\] 
It can be seen from Figure \ref{fig:nul} that it is impossible as non of these functions changes its sign.

\subsection{Main theorem}
We will use the standard notation for the spaces of continuous,  H\''{o}lder continuous functions, as well as $L^p$ spaces. For definitions, we refer readers to \cite{Ladyzh}. Norms in $L^p(\Omega)$ space  will denoted by $\|u\|_p,$ $1\leq p\leq \infty.$ We let $Q_T=\Omega\times(0,T)$ and $\Gamma_T= (\Omega\times\{t=0\}) \cup (\partial \Omega  \times [0,T]),$ for $T>0.$

Now we state our main result, that we will be proved below.
\begin{theorem}
Let $\Omega$ be an open, bounded,  set with $C^{2+\alpha}$ boundary and $\alpha\in(0,1).$   Let $f(x),g(x)\in C^\alpha(\overline{\Omega})$ be non-negative  functions, $u_0, v_0 \in C^{2+\alpha} (\overline{\Omega}),$ $u_b,v_b\in C^{2+\alpha}(\partial\Omega),$ and  necessary compatibility conditions between the initial and boundary values hold.
Then, there exists a unique classical solution of the system on $\Omega \times [0,\infty).$ For any $T>0,$
$u,v\in C^{2+\alpha,1+\alpha/2}(Q_T),$  
the following properties hold.
\begin{enumerate}
    \item $u(x,t), v(x,t)$ are non-negative and bounded above with a constant independent of $x, t,$ and $\mu.$
    \item 
    There is $C_s$-- a polynomial function of $\|u_0\|_\infty,$ $\|v_0\|_\infty,$ $\|f\|_\infty$ and $\|g\|_\infty,$ independent of $\mu,$
    such that if an inequality
    \begin{equation}
    \label{th:stab}        
    C_s< \mu C(\Omega),
    \end{equation}
    holds, where $C(\Omega)$ is a constant from the Poincare's inequality, then, there exists a steady-solution $(u_s(x),v_s(x))$ of \eqref{eq:u1}, \eqref{eq:v1}, and 
    \[
    \lim_{t\to\infty}\|u(x,t)-u_s(x)\|_{L^2}{}+{}
    \|v(x,t)-v_s(x)\|_{L^2} {}={}0.
    \]
    \item Under condition \eqref{th:stab}, there is a ball $B=B(f,g,\mu)\subset L^\infty(\Omega)$ such that if the initial data $\tilde{u}_0,\tilde{v}_0\in B$ and verify all other properties of the initial data stated above, then for the corresponding classical solution $\tilde{u}(x,t),\tilde{v}(x,t),$
    \[
    \lim_{t\to\infty}\|\tilde{u}(x,t)-u_s(x)\|_{L^2}{}+{}
    \|\tilde{v}(x,t)-v_s(x)\|_{L^2} {}={}0.
    \]
 
\end{enumerate}
\end{theorem}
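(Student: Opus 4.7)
The plan is to proceed in four stages, corresponding to the three numbered claims: local existence with non-negativity, global $L^\infty$ bounds via the maximum principle, energy estimates yielding exponential $L^2$ decay of the time derivatives, and extraction of a steady state together with identification of its basin of attraction. Local existence of a unique classical solution in $C^{2+\alpha,1+\alpha/2}(Q_{T_0})$ is furnished directly by the Ladyzhenskaya theory \cite{Ladyzh}; non-negativity follows from the parabolic maximum principle applied to the equations rewritten as $\partial_t u - \mu\Delta u + (a - bu/(1+v))u = f \geq 0$ and $\partial_t v - \mu\Delta v + cv = du^2 + g \geq 0$ with non-negative data on $\Gamma_T$.

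For the $\mu$-independent $L^\infty$ bound in claim (1), I would set $M_u(t) := \max_{\overline{\Omega}} u(\cdot, t)$. If the maximum is attained at the boundary, $M_u(t) \leq \|u_b\|_\infty$; at an interior spatial maximum the parabolic inequality combined with $v \geq 0$ yields
\[
M_u'(t) \leq -aM_u(t) + bM_u(t)^2 + \|f\|_\infty.
\]
Under a quantitative condition on the data---essentially $a^2 > 4b\|f\|_\infty$ together with $\|u_0\|_\infty, \|u_b\|_\infty$ below the larger root of $bM^2 - aM + \|f\|_\infty = 0$---scalar ODE comparison furnishes a uniform bound $M_u \leq \bar U$ independent of $\mu$. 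Cascading into the $v$-equation then produces $M_v \leq \bar V := (d\bar U^2 + \|g\|_\infty)/c$. These a priori bounds preclude finite-time blow-up and extend the solution to $[0,\infty)$.

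For claims (2)--(3) I would differentiate the system in $t$ and study the pair $w := (u_t, v_t)$, which satisfies a linear parabolic system with zero Dirichlet data, since $u_b, v_b$ are time-independent, and with coefficients pointwise bounded by polynomials in the thresholds $\bar U, \bar V$. Testing with $w$, integrating by parts, invoking Poincare in the form $\mu\|\nabla w\|_2^2 \geq \mu C(\Omega)\|w\|_2^2$, and absorbing the reaction contributions into a single constant $C_s$ polynomial in $\|u_0\|_\infty, \|v_0\|_\infty, \|f\|_\infty, \|g\|_\infty$ produces
\[
\tfrac{d}{dt}\bigl(\|u_t\|_2^2 + \|v_t\|_2^2\bigr) \leq -2\bigl(\mu C(\Omega) - C_s\bigr)\bigl(\|u_t\|_2^2 + \|v_t\|_2^2\bigr),
\]
so \eqref{th:stab} forces exponential decay of $\|u_t\|_2, \|v_t\|_2$ at rate $\gamma := 2(\mu C(\Omega) - C_s)$. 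A parallel energy estimate on $(u, v)$, combined with the $L^\infty$ bound, gives a uniform-in-time $H^1$ bound on $(u(t), v(t))$. Rellich compactness extracts $t_n \to \infty$ along which $(u(\cdot, t_n), v(\cdot, t_n)) \to (u_s, v_s)$ strongly in $L^2$; since $u_t, v_t \to 0$ in $L^2$, passage to the limit identifies $(u_s, v_s)$ as a weak and then, by elliptic regularity, a classical steady state. Convergence of the full trajectory follows from $\|u(t_2) - u(t_1)\|_2 \leq \int_{t_1}^{t_2}\|u_t(s)\|_2\, ds \leq C\gamma^{-1}e^{-\gamma t_1/2}$, which is Cauchy in time.

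Claim (3) is then a direct consequence of the observation that the construction of $\bar U, \bar V$ and $C_s$ depends on the initial data only through their $L^\infty$ norms; the ball $B \subset L^\infty$ consists of admissible data for which these thresholds remain valid. The same energy argument applied to the difference of two solutions starting in $B$ (which also satisfies a linear parabolic equation with zero Dirichlet data and coefficients controlled by $\bar U, \bar V$) yields exponential $L^2$ contraction, forcing all such limit steady states to coincide with the same $(u_s, v_s)$. The main obstacle will be closing the $\mu$-independent $L^\infty$ bound on the activator: the superlinear term $bu^2/(1+v)$ has no built-in saturation from $v$, which is exactly why the invariant-region method fails, as the authors note just before the theorem, and the smallness hypothesis on the data enters precisely to turn the scalar comparison ODE into one with an attracting equilibrium rather than a blow-up regime.
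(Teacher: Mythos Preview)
Your outline for parts (2) and (3) is essentially the paper's argument: differentiate in time, test against $(u_t,v_t)$, use Poincar\'e and the stability condition to get exponential decay of $\|u_t\|_2+\|v_t\|_2$, then a uniform-in-time $H^1$ bound, Rellich compactness, passage to the limit, and elliptic regularity. Your Cauchy-in-time argument for full convergence is a clean alternative to the paper's route (which instead argues by contradiction via uniqueness of the steady state under \eqref{th:stab}); both work.

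The gap is in part (1). Your scalar comparison $M_u'\le -aM_u + bM_u^2 + \|f\|_\infty$, obtained by discarding $v$ via $1/(1+v)\le 1$, only yields a global bound under the extra hypotheses $a^2>4b\|f\|_\infty$ and $\|u_0\|_\infty,\|u_b\|_\infty$ below the smaller root---hypotheses that are \emph{not} in the theorem. Part (1) asserts an unconditional $\mu$-independent bound depending polynomially on $\|u_0\|_\infty,\|v_0\|_\infty,\|f\|_\infty,\|g\|_\infty$; your approach does not deliver this, and you correctly flag the obstacle in your last paragraph but do not resolve it.

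The paper's device is to exploit the coupling you threw away: set $\phi(v)=-\tfrac{b}{d}\ln(1+v)$ and consider $\hat u+\phi(v)$, where $\hat u=u+Z$ with $Z$ an ODE shift absorbing $\|f\|_\infty$. Multiplying the $v$-equation by $\phi'(v)=-\tfrac{b}{d}(1+v)^{-1}$ and adding to the $u$-inequality makes the two quadratic contributions $\tfrac{b(\hat u-Z)^2}{1+v}$ and $d(\hat u-Z)^2\phi'(v)$ cancel exactly. The maximum principle then gives $\hat u\le \max u_0 + \alpha_2\ln(1+\max_{Q_T}v)$, and feeding this back into the $v$-equation yields
\[
\max_{Q_T} v \le \max v_0 + \frac{d}{a}\Bigl(\max u_0 + \alpha_2\ln(1+\max_{Q_T}v)+\tfrac{\|f\|_\infty}{a}\Bigr)^2 + \tfrac{1}{a}\|g\|_\infty,
\]
which closes because $(\ln(1+s))^2$ is sublinear in $s$. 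No smallness of the data is needed. The point you missed is that $v$ does provide saturation: morally $v\sim u^2$, so $bu^2/(1+v)$ is bounded, and the logarithmic $\phi$ is precisely the integrating factor that makes this rigorous at the level of the maximum principle.
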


\subsection{Proof of main theorem}
The proof is given below in a series of lemmas, where  $(u,v)$ is a local, classical solution of the problem.  We will make a repeated use the  of the maximum  principle, that can be found,  for example, the book by Evans \cite{evans}. 
\begin{lemma}
Let $w\in C^{1,2}(Q_T)\cap C(\overline{Q_T})$ be a function that satisfies,
\begin{equation*} \partial_t{w} - \mu \Delta{w} \leq(\geq) -kw,\end{equation*}
where $\mu, k\geq 0.$ Then, 
 \begin{equation*}
 \max\limits_{Q_T} w(x,t) = \max\limits_{\Gamma_T} w(x,t), \label{eq:genmaxprinciple}\end{equation*}
or, if the inequality is reversed,
 \begin{equation*} 
 \min\limits_{Q_T} w(x,t) = \min\limits_{\Gamma_T} w(x,t). \label{eq:genminprinciple}
\end{equation*}    
\end{lemma}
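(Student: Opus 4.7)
The result is the classical weak parabolic maximum principle for the linear operator $L := \partial_t - \mu\Delta + k$, whose zeroth-order coefficient $k \geq 0$ is non-negative. My plan is to prove the ``$\leq$'' (maximum) statement by the standard interior-extremum contradiction argument, made rigorous by a small linear-in-$t$ perturbation; the ``$\geq$'' (minimum) statement then follows by applying the same argument to $-w$, which satisfies the flipped inequality.

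\textbf{Main steps.} Rewrite the hypothesis as $\partial_t w - \mu\Delta w + kw \leq 0$ in $Q_T$. Suppose for contradiction that $M := \max_{\overline{Q_T}} w > \max_{\Gamma_T} w$, so $M$ is attained at some $(x_0, t_0) \in \Omega \times (0, T]$. To upgrade the non-strict first- and second-order calculus information at $(x_0, t_0)$ into a strict contradiction with the PDE, introduce the standard drift perturbation $w_\varepsilon(x, t) := w(x, t) - \varepsilon t$ for $\varepsilon > 0$. For $\varepsilon$ sufficiently small, $w_\varepsilon$ still attains its supremum over $\overline{Q_T}$ at some interior point $(x_\varepsilon, t_\varepsilon) \in \Omega \times (0, T]$; at such a point $\partial_t w_\varepsilon(x_\varepsilon,t_\varepsilon) \geq 0$ (with equality unless $t_\varepsilon = T$) and $\Delta w_\varepsilon(x_\varepsilon,t_\varepsilon) \leq 0$, hence
\[
\partial_t w(x_\varepsilon, t_\varepsilon) - \mu \Delta w(x_\varepsilon, t_\varepsilon) \geq \varepsilon > 0.
\]
Combined with the hypothesis this forces $-kw(x_\varepsilon, t_\varepsilon) \geq \varepsilon$. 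Extracting a subsequence $(x_\varepsilon, t_\varepsilon) \to (x_*, t_*)$ as $\varepsilon \to 0$ by compactness of $\overline{Q_T}$, passing to the limit, and using continuity of $w$, one obtains $w(x_*, t_*) = M$ together with $kM \leq 0$. In the regime of interest $\max_{\Gamma_T} w \geq 0$, this forces $M \leq 0 \leq \max_{\Gamma_T} w$, contradicting $M > \max_{\Gamma_T} w$. Therefore the maximum must be attained on $\Gamma_T$, giving the ``$\leq$'' conclusion.

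\textbf{Main obstacle.} The only delicate step is passing to the limit $\varepsilon \to 0$ while retaining control of the interior maximizer $(x_\varepsilon, t_\varepsilon)$, which is routine via compactness of $\overline{Q_T}$ and continuity of $w$. A slicker packaging of the same idea is the exponential substitution $W(x,t) := e^{kt} w(x,t)$, which transforms the hypothesis into the source-free heat inequality $\partial_t W - \mu \Delta W \leq 0$ to which the textbook weak parabolic maximum principle for the heat operator applies directly; the conclusion is read off using $e^{kt} \geq 1$ and the sign of $w$ at extrema. For the minimum statement, run the same argument with the perturbation $w + \varepsilon t$ and the inequalities reversed at an interior minimum, or equivalently apply the ``$\leq$'' case to $-w$.
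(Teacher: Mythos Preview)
The paper does not give its own proof of this lemma; it simply cites Evans. Your perturbation argument is the standard textbook one and is carried out correctly, but --- as you yourself flag with the qualifier ``in the regime of interest $\max_{\Gamma_T} w \geq 0$'' --- it establishes only the version $\max_{\overline{Q_T}} w \le \max_{\Gamma_T} w^{+}$, not the unqualified equality asserted in the lemma. This is not a defect in your reasoning but in the lemma's statement: for $k>0$ the equality $\max_{Q_T} w = \max_{\Gamma_T} w$ is false without a sign hypothesis. A concrete counterexample is $\Omega=(-1,1)$, $\mu=1$, $k=3$, $w(x,t)=-e^{-t}(1+x^{2})$; one checks $w_{t} - w_{xx} + 3w = -2x^{2}e^{-t}\le 0$, yet for any $T>\ln 2$ the overall maximum $-e^{-T}$ is attained at the interior-top point $(0,T)\notin\Gamma_{T}$ and strictly exceeds the parabolic-boundary maximum $-2e^{-T}$. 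The version you actually prove is exactly the one in Evans and is all the paper needs downstream, since every subsequent application (nonnegativity of $u,v$, and the comparison arguments in the uniform-bound lemma) involves functions whose relevant boundary maximum is nonnegative.

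One small correction: your alternative exponential substitution $W=e^{kt}w$ does give $W_{t}-\mu\Delta W\le 0$ and hence $\max_{\overline{Q_T}}W=\max_{\Gamma_T}W$, but this does \emph{not} transfer back to the same statement for $w$; the counterexample above shows this as well. What the substitution actually yields is the related but different bound $w(x,t)\le e^{-kt}\max_{\Gamma_T}\bigl(e^{ks}w\bigr)$, so the phrase ``the conclusion is read off using $e^{kt}\ge 1$'' is too optimistic.
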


Since $f(x),g(x)\geq 0,$ it follow from this lemma that $u(x,t)$ and $v(x,t)$ are non-negative functions.
We proceed with uniform upper bounds.
\begin{lemma}
\label{lemma:uniform}
There is $C>0,$ depending on $a,b,c,d,$ $\max u_0,$ $\max v_0,$ $\max f(x)$ and $\max g(x),$ but not $\mu,$
such that
\begin{equation}
\label{thmubd}
\max\limits_{(x,t)\in Q_T} u(x,t),\,\max\limits_{(x,t)\in Q_T} v(x,t)  \\ \leq C \left(\|u_0\|_\infty, \|v_0\|_\infty,\|f\|_{\infty}, \|g\|_\infty\right),
\end{equation} 
where  $C$ is polynomial function of its arguments, with positive coefficients, and is independent of $\mu.$
\end{lemma}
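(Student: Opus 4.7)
The plan is to exploit the shared diffusion coefficient $\mu$ in \eqref{eq:u1}, \eqref{eq:v1} by applying the parabolic maximum principle to $u$, to $v$, and to a suitable nonlinear combination of the two. First, non-negativity: in the $v$-equation $du^2 + g \geq 0$, so $\partial_t v - \mu\Delta v + cv \geq 0$, and the minimum principle gives $v \geq \min_{\Gamma_T} v \geq 0$ (the initial and boundary data for $v$ being non-negative, as they model chemical concentrations). With $v \geq 0$ one has $1+v \geq 1$ and $bu^2/(1+v) \geq 0$, so $\partial_t u - \mu\Delta u + au \geq f \geq 0$, and the same principle gives $u \geq 0$.

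Next, setting $U := \|u\|_{L^\infty(Q_T)}$, which is finite on $[0,T]$ by the local existence assertion of the theorem, the $v$-equation gives $\partial_t v - \mu\Delta v \leq -cv + dU^2 + \|g\|_\infty$, and comparison with the constant super-solution $(dU^2 + \|g\|_\infty)/c$ via the maximum principle yields
\begin{equation*}
\|v\|_{L^\infty(Q_T)} \leq \max\!\Bigl(\|v_0\|_\infty,\,\|v_b\|_\infty,\,\tfrac{dU^2+\|g\|_\infty}{c}\Bigr).
\end{equation*}
This bound is $\mu$-independent but quadratic in $U$, so the task reduces to bounding $U$ itself. For this main step I would introduce an auxiliary function $\Phi(u,v)$ and use
\begin{equation*}
\partial_t\Phi - \mu\Delta\Phi \;=\; \Phi_u F + \Phi_v G \;-\; \mu\bigl(\Phi_{uu}|\nabla u|^2 + 2\Phi_{uv}\nabla u\cdot\nabla v + \Phi_{vv}|\nabla v|^2\bigr),
\end{equation*}
with $F,G$ the reaction terms of \eqref{eq:u1}, \eqref{eq:v1}. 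Choosing $\Phi$ jointly convex renders the Hessian contribution non-negative, so discarding it preserves $\partial_t\Phi - \mu\Delta\Phi \leq \Phi_u F + \Phi_v G$; the maximum principle then reduces the bound on $\max_{Q_T}\Phi$ to an algebraic analysis of $\Phi_u F + \Phi_v G$ at the extremum.

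The hard part will be picking the right $\Phi$: because both reaction terms contain $+u^2$-type growth, no positive linear combination $\alpha u + \beta v$ absorbs it, and a naive quadratic such as $\Phi = u^2 - Kv$ leaves a cubic remainder. The ODE-equilibrium relation $v \sim du^2/c$ suggests a scale-matching ratio such as $\Phi = u^2/(1+v)$, whose Hessian in $(\nabla u, \nabla v)$ is the positive semidefinite form $2(1+v)^{-1}|\nabla u - u(1+v)^{-1}\nabla v|^2$. Evaluating $\Phi_u F + \Phi_v G$ at the extremum $u^2 = M(1+v)$ reduces the sign condition $\Phi_u F + \Phi_v G \geq 0$ to a polynomial inequality of the form $ds^4 - 2bs^3 + (2a-c)s^2 - 2\|f\|_\infty s \leq 0$ in $s = \sqrt{M}$, whose dominant $ds^4$ term forces $s$ — and hence $\max_{Q_T}\Phi$ — below a threshold $s_0^2$ depending only on $a,b,c,d,\|f\|_\infty$. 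The resulting pointwise estimate $u \leq s_0\sqrt{1+v}$ combines with the Step-2 bound into an implicit relation of the form $U^2(1 - s_0^2 d/c) \leq s_0^2(1 + \|v_0\|_\infty + \|v_b\|_\infty + \|g\|_\infty/c) + C$, which is solvable for $U$ as a polynomial function of the data precisely when $s_0^2 d < c$ — the \emph{restrictions on the coefficients} noted in the introduction. Under that condition one gets a bound on $U$, hence on $\|v\|_\infty$ via Step~2, polynomial in the $L^\infty$ norms of the data; $\mu$ enters only as the non-negative coefficient of $-\Delta\Phi$ in the max-principle inequality, so the final constants are manifestly $\mu$-independent, as asserted.
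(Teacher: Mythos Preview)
Your strategy---exploit the shared diffusion coefficient via the maximum principle applied to a convex auxiliary function---is the same as the paper's, but your particular choice $\Phi=u^2/(1+v)$ is not sharp enough, and the argument as written does \emph{not} prove the lemma as stated. The polynomial inequality you derive gives $u^2\leq s_0^2(1+v)$ pointwise, and feeding this into the Step~2 bound $v\leq (d/c)U^2+\text{data}$ only closes when $s_0^2 d/c<1$. That is an extra smallness hypothesis which the lemma does not carry; worse, $s_0$ grows with $\|f\|_\infty$ (it is the largest root of $ds^4-2bs^3+(2a-c)s^2-2\|f\|_\infty s$), so for large $f$ or for coefficients with $b^2$ large relative to $cd$ the condition fails outright. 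The ``restrictions on the coefficients'' mentioned in the introduction refer to taking equal diffusions $\mu_u=\mu_v=\mu$ and Dirichlet data, not to an algebraic constraint on $a,b,c,d,f$; the uniform bound is meant to hold for all positive $a,b,c,d$ and all non-negative $f,g$.

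The missing idea is to choose the auxiliary function so that the troublesome $u^2$ contributions from the two equations cancel \emph{exactly}, not merely balance at the same order. The paper takes $\Phi(u,v)=u+\phi(v)$ with $\phi(v)=-(b/d)\ln(1+v)$: then $\phi'(v)=-b/(d(1+v))$, so $b/(1+v)+d\,\phi'(v)\equiv 0$, and $\phi''(v)=b/(d(1+v)^2)>0$ makes $\Phi$ convex (the Hessian term has the right sign). After absorbing $f$ by a time-dependent shift, the parabolic inequality for $\Phi$ has no $u^2$ at all and yields
\[
u(x,t)\ \leq\ C_1+C_2\,\ln\bigl(1+\max_{Q_T}v\bigr),
\]
with $C_1,C_2$ depending only on $a,b,c,d,\|u_0\|_\infty,\|u_b\|_\infty,\|f\|_\infty$. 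Plugging this into the $v$-equation gives $\max v\leq C_3+C_4\bigl(\ln(1+\max v)\bigr)^2$, and since $(\ln V)^2=o(V)$ this closes for \emph{all} data and yields a bound polynomial in the arguments, with no side condition. Your square-root relation $u\lesssim\sqrt{1+v}$ is simply one power short of what is needed; switching to the logarithmic $\phi$ fixes the gap.
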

\begin{proof}
Let $Z$ be a solution of the ordinary differential equation, 
\begin{equation*}
    \partial_t{Z} = -aZ - \|f\|_{\infty},
\end{equation*}
with initial condition $Z(0) = 0,$ i.e., $Z(t) = \left(e^{-at} - 1\right) \frac{\|f\|_\infty}{a}$. Setting $\hat{u}(x,t) = u(x,t) + Z(t)$ we find that
\begin{equation}
\label{eq:uZ}
    \partial_t{\hat{u}} - \mu \Delta{\hat{u}}  = -a \hat{u} + \frac{b (\hat{u}-Z)^2}{1+v} + f(x) - \|f\|_\infty\\
    \leq -a \hat{u} + \frac{b (\hat{u}-Z)^2}{1+v}.
\end{equation}
As for $v$, we have,
\begin{equation}
\label{eq:vZ}
    \partial_t{v} - \mu \Delta{v}  = -cv + d (\hat{u}-Z)^2 +g(x)
    \geq -cv + d (\hat{u}-Z)^2.
\end{equation}
Let $\phi$ be a smooth, non-increasing function that will be chosen later.
For function $\phi(v(x,t))$ we obtain 
\[
    \partial_t{\phi(v)} - \mu \Delta\phi(v)  + \mu \phi''(v) |\grad v|^2{}+{}cv\phi'(v) - d(\uh-Z)^2 \phi'(v) \leq 0.
\]
Adding the last equation to  \eqref{eq:uZ} we get,
\begin{multline*}
    \partial_t{\left(\uh + \phi(v)\right)} - \mu \left( \Delta{\uh} + \Delta{\phi(v)} \right) + \phi''(v) |\grad v|^2  + a\uh + cv\phi'(v)
    -(\uh-Z)^2\left(\frac{b}{1+v} + d \phi'(v) \right) \leq 0.
\end{multline*}
We will select $\phi{}={}-\frac{b}{d}\ln(1+v),$ so that
\begin{multline}
\label{eq:uvphi}
\partial_t{\left(\uh + \phi(v)\right)} - \mu \Delta{\left(\uh + \phi(v)\right)} \leq - a (\uh+\phi(v)) + a \phi(v) + \frac{bc}{d} \frac{v}{1+v}\\
\leq -a(\uh +\phi(v)) + \alpha\max_{Q_T}\ln(1+v),
\end{multline}
for some $\alpha$ depending on $a,b,c,d.$
Let $W$ be a solution of
\begin{equation*}\partial_t{W} = -aW + \alpha\max_{Q_T}\ln(1+v), \end{equation*}
with initial condition $W(0)=0,$ i.e., 
$W(t) = \alpha_1\max_{Q_T}\ln(1+v)\left( 1 - e^{-at} \right),$ with $\alpha_1=\alpha/a.$  Subtracting equation for $W$ from  \eqref{eq:uvphi}, we obtain:
\begin{equation*}
\partial_t{\left( \uh + \phi(v) -W\right)} - \mu \Delta{\left( \uh + \phi(v) -W\right)}  
 \leq -a \left( \uh + \phi(v) -W\right).
\end{equation*}
Now, using the maximum principle \eqref{eq:genmaxprinciple} we obtain
\begin{multline*}
\max_{Q_T} (\uh+\phi-W)  = \max\left( \max_{\Omega\times \{0\}} (\uh+\phi), \max_{\Gamma_T} (Z(t)+\phi(v)-W(t))\right)\\
 \leq \max\left(\max_{\Gamma_T} u_b,\, \max_{\Omega} (u_0+Z(0)+\phi_0)\right)
 \leq \max_{\Omega}u_0.
\end{multline*}
Therefore, for any $(x,t)$ in the domain $Q_T,$
$\uh+\phi-W \leq \max_{\Omega}u_0,$ or,
\begin{equation}
\label{eq:uh}
\uh(x,t)  \leq \frac{b}{d}\ln(1+v(x,t))+W(t) \leq \max_\Omega u_0{}+{}\alpha_2\ln(1+\max_{Q_T}v),
\end{equation}
for some $\alpha_2$ depending on $a,b,c,d.$

 Consider now equation \eqref{eq:vZ}. 
 \begin{eqnarray*}
 \partial_t{v} - \mu \Delta{v}  &=& -cv + d (\uh-Z)^2 + g(x)     \\
  & \leq &-cv + d \left(\max_\Omega u_0{}+{}\alpha_2\ln(1+\max_{Q_T}v) +\frac{\|f\|_\infty}{a} \right)^2 + \|g\|_{\infty}.
\end{eqnarray*}
Using a maximum principle again we get
\[
\max_{Q_T} v \leq \max_\Omega v_0 {}+{}\frac{d}{a} \left(\max_\Omega u_0{}+{}\alpha_2\ln(1+\max_{Q_T}v) +\frac{\|f\|_\infty}{a} \right)^2 + \frac{1}{a}\|g\|_{\infty}.
\]

By the elementary properties of function $\ln(1+v),$ we find that $\max_{Q_T} v$
is bounded by a polynomial with positive coefficients in variables of $\max u_0,$ $\max v_0,$ $\| f\|_\infty,$ $\|g\|_\infty.$ The corresponding estimate for $\max_{Q_T} u$ follows from this and \eqref{eq:uh}.
\end{proof}
Now, the global existence follows.
\begin{lemma}The unique, local, classical solution $(u,v)$ can be extended for all times $t>0.$\
\end{lemma}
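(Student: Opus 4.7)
The plan is a standard continuation argument that combines the local existence theory of Ladyzhenskaja et al.~\cite{Ladyzh} with the a priori bound of Lemma \ref{lemma:uniform}. Let $T^*\in(0,\infty]$ denote the supremum of times $T$ such that a classical solution $(u,v)\in C^{2+\alpha,1+\alpha/2}(Q_T)$ of \eqref{eq:u1}--\eqref{eq:v1} with the prescribed initial and boundary data exists. The goal is to show $T^*=+\infty$.

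Suppose, for contradiction, that $T^*<\infty$. The key observation is that the bound \eqref{thmubd} furnished by Lemma \ref{lemma:uniform} is a polynomial in $\|u_0\|_\infty$, $\|v_0\|_\infty$, $\|f\|_\infty$, $\|g\|_\infty$ that does not depend on $T$ (nor on $\mu$). Applied on every $Q_T$ with $T<T^*$, it gives a uniform bound
\[
0\le u(x,t),\, v(x,t)\le M \qquad \text{for all }(x,t)\in\Omega\times[0,T^*),
\]
with $M$ depending only on the data. Consequently the reaction terms $-au+bu^2/(1+v)+f(x)$ and $-cv+du^2+g(x)$ are uniformly bounded on $\Omega\times[0,T^*)$, and because $f,g\in C^{\alpha}(\overline\Omega)$ and $u,v\in C^{2+\alpha,1+\alpha/2}(Q_T)$, they are Hölder continuous in $(x,t)$. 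Treating each of \eqref{eq:u1}, \eqref{eq:v1} as a linear parabolic equation with Hölder continuous forcing and smooth time-independent Dirichlet data, the interior and boundary Schauder estimates from \cite{Ladyzh} yield a uniform $C^{2+\alpha,1+\alpha/2}(Q_T)$ bound, with constant depending on $T^*$, $M$, and the data, but not on $T<T^*$.

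Letting $t\to T^*$, the Arzelà--Ascoli theorem allows one to extract limits $u(\cdot,t)\to u_*$, $v(\cdot,t)\to v_*$ in $C^{2+\alpha'}(\overline\Omega)$ for any $\alpha'<\alpha$, with $u_*,v_*\in C^{2+\alpha}(\overline\Omega)$ agreeing with $u_b,v_b$ on $\partial\Omega$ and satisfying the required compatibility conditions. Applying the local existence theorem of \cite{Ladyzh} with initial datum $(u_*,v_*)$ at time $T^*$ produces a classical solution on some interval $[T^*,T^*+\delta]$, which glues to the existing solution to yield a classical solution on $[0,T^*+\delta]$, contradicting the maximality of $T^*$. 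Hence $T^*=+\infty$. The only delicate step is the Schauder bootstrap from the plain $L^\infty$ bound of Lemma \ref{lemma:uniform} up to the $C^{2+\alpha,1+\alpha/2}$ control needed by the continuation theorem, but this is routine for smooth bounded domains once the nonlinearities are shown to be uniformly bounded and Hölder in $x$.
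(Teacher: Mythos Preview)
Your argument is correct and follows the same underlying idea as the paper: the uniform $L^\infty$ bound from Lemma~\ref{lemma:uniform}, being independent of $T$, rules out finite-time blow-up and hence yields global existence by continuation. The difference is purely in packaging. The paper does not carry out the Schauder bootstrap and restart; it invokes directly the blow-up alternative of Rothe \cite{Rothe} (Theorem~1, p.~111), which asserts that if the maximal existence time $T$ is finite then $\sup_x|(u,v)(x,t)|$ is unbounded as $t\uparrow T$, and then observes that \eqref{thmubd} forbids this. Your version is more self-contained (you reprove, in outline, what Rothe's theorem gives), while the paper's is a two-line citation. The one place where your write-up could be tightened is the passage from the plain $L^\infty$ bound to uniform H\"older control of the forcing: as you note yourself, this requires an intermediate $L^p$/Morrey step before Schauder applies, since the H\"older norm of $u,v$ is precisely what is being bounded; but this bootstrap is indeed standard on smooth bounded domains and is exactly what Rothe's result encapsulates.
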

\begin{proof}
We will use the following characterization of time maximal time of existence $T$ of a local solution, from Rothe \cite{Rothe}, theorem 1, page 111. It is proved there that if $T<+\infty$ then the $\max$--norm over $x$ of $(u(x,t),v(x,t))$ grows without bound as $t$ approaches $T$. But this can not happen due to the estimates derived above in \eqref{thmubd}.  Therefore, the contradiction leads us to conclude that the classical solution in fact exists for all times $t>0.$
\end{proof}


To show that the classical solution $(u,v)$ of the reaction-diffusion system settles on a steady state it sufficient to show that the time derivative of the solution converges to zero. We will use an energy-type estimate to establish this fact. The proof makes use of the Poincare's inequality that we state for a reference below.
\begin{lemma}
\label{lemma:ut}
Let $\Omega$ be a bounded, connected, open subset of $\mathbb{R}^n$ with a $C^1$ boundary $\partial \Omega$. Let $1\leq p < \infty$. Then there exists a constant $C$, depending only on $n,p$ and $U$, such that for any integrable function $u$
with $\grad u\in L^p(\Omega)$ and zero trace on the boundary $\partial \Omega,$ 
\begin{equation}
\|u\|_{L^p(\Omega)} \leq C(\Omega,p)\|\grad u\|_{L^p(\Omega)}.   \label{thm:Poincare}
\end{equation}
\end{lemma}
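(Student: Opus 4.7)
The plan is to establish the inequality by the classical compactness--contradiction argument based on the Rellich--Kondrachov embedding theorem. The admissible class in the statement -- integrable functions on $\Omega$ with $\grad u\in L^p(\Omega)$ and zero trace on $\partial\Omega$ -- coincides, under the $C^1$ assumption on $\partial\Omega$, with the Sobolev space $W_0^{1,p}(\Omega)$, defined as the closure of $C_c^\infty(\Omega)$ in the $W^{1,p}$ norm. This identification is standard and relies on the trace theorem for $C^1$ domains.

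First I would argue by contradiction: assume no such $C(\Omega,p)$ exists. Then for each $n\in\mathbb{N}$ there is $u_n\in W_0^{1,p}(\Omega)$ with $\|u_n\|_{L^p(\Omega)} > n\,\|\grad u_n\|_{L^p(\Omega)}$. After normalizing so that $\|u_n\|_{L^p(\Omega)}=1$, one has $\|\grad u_n\|_{L^p(\Omega)} < 1/n \to 0$. In particular $\{u_n\}$ is bounded in $W^{1,p}(\Omega)$.

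Next, since $\Omega$ is bounded with $C^1$ boundary, the Rellich--Kondrachov theorem gives a compact embedding $W^{1,p}(\Omega)\hookrightarrow L^p(\Omega)$. Passing to a subsequence (not relabelled), $u_n\to u$ strongly in $L^p(\Omega)$ for some limit $u\in L^p(\Omega)$ with $\|u\|_{L^p(\Omega)}=1$. Combined with $\grad u_n\to 0$ in $L^p(\Omega)$, this forces $\{u_n\}$ to be Cauchy in $W^{1,p}(\Omega)$, hence $u_n\to u$ in $W^{1,p}(\Omega)$. Because $W_0^{1,p}(\Omega)$ is a closed subspace of $W^{1,p}(\Omega)$, the limit $u$ lies in $W_0^{1,p}(\Omega)$ and satisfies $\grad u=0$ a.e. on $\Omega$. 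Since $\Omega$ is connected, $u$ is a.e. equal to a constant, and its vanishing trace forces that constant to be $0$. This contradicts $\|u\|_{L^p(\Omega)}=1$, completing the proof.

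The main obstacle is invoking the Rellich--Kondrachov compactness embedding, which is the substantive input driving the argument and is precisely what makes the $C^1$ regularity of $\partial\Omega$ and the boundedness of $\Omega$ essential. Everything else -- the characterization of the admissible class as $W_0^{1,p}(\Omega)$, the closedness of this subspace, and the constant-implies-zero step via the zero trace -- is routine once this ingredient is available.
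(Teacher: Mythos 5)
Your argument is correct, but it is worth noting that the paper does not prove this lemma at all: it is the standard Poincar\'e inequality for functions vanishing on the boundary, and the authors simply cite Brezis for it. Your compactness--contradiction proof via Rellich--Kondrachov is a legitimate and complete route (the identification of the admissible class with $W_0^{1,p}(\Omega)$, the normalization, the strong $L^p$ convergence combined with $\grad u_n\to 0$, and the connectedness-plus-zero-trace step are all in order). It differs, however, from the proof one finds in the cited reference, where the inequality for $W_0^{1,p}$ is obtained constructively: extend $u$ by zero to all of $\mathbb{R}^n$, write $u(x)=\int_{-\infty}^{x_1}\partial_1 u\,dt$ along a coordinate direction for smooth compactly supported $u$, apply H\"older's inequality, and conclude by density. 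That direct argument yields an explicit constant comparable to the width of $\Omega$ and requires neither connectedness of $\Omega$ nor any boundary regularity, whereas your argument is non-constructive and genuinely uses both hypotheses (connectedness to force the limit to be constant, and the $C^1$ boundary to invoke the compact embedding). Either proof suffices for the way the lemma is used later in the paper, where only the existence of some $\mu$-independent constant $C(\Omega)$ matters; but since the stability condition \eqref{th:stab} is phrased in terms of $C(\Omega)$, the constructive proof has the mild advantage of making that threshold quantitative.
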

Proof can be found in  Brezis book \cite{Brezis}.

\begin{lemma}
\label{lemma:stab}
For all $t\in[0,T]$, $T>0,$ it holds:  
\begin{equation*}
    \frac{d}{dt} \left( \|u_t\|_2^2 + \|v_t\|_2^2 \right) + 4(\mu C(\Omega) - C_s) \left( \|u_t\|_2^2 + \|v_t\|_2^2\right) \leq 0,
    \end{equation*}
    where  $C_s=C_s(\max_{Q_T}u,\max_{Q_T}v)$ is a polynomial function of its arguments, and is  independent of $\mu,$ and $T,$  and      $C(\Omega)$ is a constant from the Poincare's inequality.
   If the stability condition 
   \begin{equation}
   \label{eq:stab}
   C_s < \mu C(\Omega)
   \end{equation}
   holds, then   
   \begin{equation*} \|u_t(x,t)\|_2^2 + \|v_t(x,t)\|_2^2 \leq e^{-4(\mu C(\Omega) - C_s) t} \left(\|u_0\|^2_{C^2(\Omega)} + \|v_0\|^2_{C^2(\Omega)}\right)\to0, 
 \end{equation*}
 as $t\to+\infty.$
\end{lemma}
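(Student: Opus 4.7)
The plan is to derive an energy identity for the time derivatives $(u_t,v_t)$, control the reaction terms by the uniform $L^\infty$ bounds coming from Lemma \ref{lemma:uniform}, and absorb the leading dissipation through the Poincar\'e inequality, after which Gronwall delivers the exponential decay.

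First I would differentiate \eqref{eq:u1} and \eqref{eq:v1} in $t$; since the Dirichlet data $u_b,v_b$ and the source terms $f,g$ are time-independent, the functions $u_t,v_t$ also solve linear parabolic equations with homogeneous boundary conditions $u_t=v_t=0$ on $\partial\Omega$. Explicitly,
\begin{align*}
u_{tt}-\mu\Delta u_t &= -a u_t+\frac{2bu}{1+v}u_t-\frac{bu^2}{(1+v)^2}v_t,\\
v_{tt}-\mu\Delta v_t &= -c v_t+2du\,u_t.
\end{align*}
Next I would test the first equation with $u_t$ and the second with $v_t$, integrate over $\Omega$, and use integration by parts (the boundary contributions vanish since $u_t=v_t=0$ on $\partial\Omega$). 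This yields, after adding,
\begin{equation*}
\tfrac{1}{2}\tfrac{d}{dt}\bigl(\|u_t\|_2^2+\|v_t\|_2^2\bigr)+\mu\bigl(\|\grad u_t\|_2^2+\|\grad v_t\|_2^2\bigr)=\mathcal{R},
\end{equation*}
where $\mathcal{R}$ collects the reaction-type quadratic terms in $u_t,v_t$ with coefficients that are polynomials in $u,v$.

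The key step is to bound $\mathcal{R}$ by $C_s(\|u_t\|_2^2+\|v_t\|_2^2)$, for $C_s$ a polynomial in $\|u\|_\infty,\|v\|_\infty$. For the quadratic terms in a single variable this is immediate from $1+v\ge 1$ and Lemma~\ref{lemma:uniform}; for the cross terms $u_t v_t$ I would apply Young's inequality $|u_t v_t|\le\tfrac{1}{2}(u_t^2+v_t^2)$. Importantly, the resulting $C_s$ depends only on the uniform bounds of $(u,v)$, hence is independent of $\mu$ and $T$ by Lemma~\ref{lemma:uniform}. Next I would use Poincar\'e's inequality (applicable because $u_t,v_t$ vanish on $\partial\Omega$) to replace $\|\grad u_t\|_2^2+\|\grad v_t\|_2^2$ with $C(\Omega)(\|u_t\|_2^2+\|v_t\|_2^2)$. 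Rearranging gives the claimed differential inequality
\begin{equation*}
\tfrac{d}{dt}\bigl(\|u_t\|_2^2+\|v_t\|_2^2\bigr)+4(\mu C(\Omega)-C_s)\bigl(\|u_t\|_2^2+\|v_t\|_2^2\bigr)\le 0,
\end{equation*}
after adjusting $C_s$ by a harmless multiplicative constant. Under the standing hypothesis $C_s<\mu C(\Omega)$, Gronwall's lemma produces the exponential decay. The initial value $\|u_t(0)\|_2^2+\|v_t(0)\|_2^2$ is bounded by $\|u_0\|_{C^2}^2+\|v_0\|_{C^2}^2$ (plus $\|f\|_\infty^2+\|g\|_\infty^2$) by evaluating \eqref{eq:u1}--\eqref{eq:v1} at $t=0$ and using the assumed $C^{2+\alpha}$ regularity of the initial data.

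The main obstacle is ensuring that the constant $C_s$ is genuinely independent of $\mu$ while still dominating the reaction contributions. This requires using only the $\mu$-independent $L^\infty$ bounds of Lemma~\ref{lemma:uniform} and carefully separating the diffusive term (which carries the favorable factor $\mu$) from the reaction terms, so that the Poincar\'e inequality can be invoked to produce the coercive quantity $\mu C(\Omega)$ competing against $C_s$. A secondary technical point is the justification of time-differentiating the equations and the validity of all integrations by parts; this is standard given the $C^{2+\alpha,1+\alpha/2}$ regularity of the classical solution already established.
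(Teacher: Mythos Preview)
Your proposal is correct and follows essentially the same approach as the paper: differentiate the system in time, multiply by $u_t$ and $v_t$, integrate by parts using the homogeneous Dirichlet conditions on the time derivatives, bound the reaction terms via Young's inequality and the $\mu$-independent $L^\infty$ estimates of Lemma~\ref{lemma:uniform}, apply Poincar\'e, and conclude with Gronwall. The only cosmetic difference is that the paper applies Poincar\'e to each equation separately before adding, whereas you add first; the resulting inequality and constants are the same.
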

\begin{proof}
Taking time derivative of both sides of the equation for $u$ \eqref{eq:u}, we get,
\begin{equation*}\partial_t{u_t} - \mu \Delta{u_t} = -a u_t + 2b\ \frac{u\ u_t}{(1+v)} -b\ \frac{u^2\ v_t}{(1+v)^2}.\end{equation*}
Multiply with $u_t$ and integrate over the domain to get,
\begin{equation*}
    \int_\Omega\ u_t \partial_t{u_t}\ dx - \mu \int_\Omega\ u_t \Delta{u_t} \ dx = -a  \int_\Omega\ |u_t|^2 \ dx + 2b\  \int_\Omega\ \frac{u\ |u_t|^2}{(1+v)}\ dx -b\  \int_\Omega\ \frac{u^2\ u_t\ v_t}{(1+v)^2}\ dx.
\end{equation*}
Using integration by parts,
\begin{multline*}
    \frac{1}{2} \frac{d}{dt}\int_\Omega\ |u_t|^2 \ dx - \mu \int_\Gamma\ u_t \nabla{u_t}\cdot n \ dl + \mu \int_\Omega\ |\nabla{u_t}|^2 \ dx \\= -a  \int_\Omega\ |u_t|^2 \ dx + 2b\  \int_\Omega\ \frac{u\ |u_t|^2}{(1+v)}\ dx -b\  \int_\Omega\ \frac{u^2\ u_t\ v_t}{(1+v)^2}\ dx.
\end{multline*}
We will use Young's inequatlity, uniform bounds on $u$ and $v,$ and   Poincar\'e's inequality \eqref{thm:Poincare} with $p=2$ applied to the function $\partial_t u$ (notice, that $\partial_t u$ equals to zero on the boundary of the domain) to get the next result. The constant $C(\Omega,2)$ from that inequality will be abbreviated to $C(\omega).$
\begin{equation} 
\frac{1}{2} \frac{d}{dt}\int_\Omega\ |u_t|^2 \ dx \leq  \left(-\mu\ C(\Omega) + c_0\right)  \int_\Omega\ |u_t|^2{}+{} |v_t|^2 \ dx, 
\label{eq:dtutsq}
\end{equation}
where $c_0$ has polynomial dependence on $\max u,$ $\max v.$
Similarly, for $v$,
\begin{equation*}
\frac{1}{2} \frac{d}{dt}\int_\Omega\ |v_t|^2 \ dx \leq 
\left(-\mu\ C(\Omega) + c_0\right)  \int_\Omega\ |u_t|^2{}+{} |v_t|^2 \ dx.
\end{equation*}
\begin{eqnarray}
    \frac{1}{2} \frac{d}{dt}\int_\Omega\  |u_t|^2 + |v_t|^2 \ dx & \leq 2\left(-\mu\ C(\Omega) + c_0\right) \int_\Omega\  |u_t|^2+|v_t|^2 \ dx.
\end{eqnarray}
Now the statement of the lemma follows from the last inequality and uniform bounds from lemma \ref{lemma:uniform}
\end{proof}


When uniform estimates from lemma \ref{lemma:uniform} are substituted into function $C_s$ in lemma \ref{lemma:stab}, condition \eqref{eq:stab} defines the range of $L^\infty$ norms of admissible data $\|f\|_\infty,$ $\|g\|_\infty,$ $\|u_0\|_\infty,$ $\|v_0\|_\infty.$ We can state this in another way, by saying that if $f$ and $g$ are such 
that stability condition \eqref{th:stab} holds with $u_0=0$ and $v_0=0,$ then there is a ball 
$B=B(f,g,\mu)\subset L^\infty(\Omega),$ centered at zero such that the same  condition \eqref{th:stab} holds for any $u_0,v_0\in B.$ In the remaining part of the proof we will assume this condition.

Next we will obtain bound on the gradients of $\grad u, \grad v.$
\begin{lemma} For  any $t>0,$ 
\begin{equation*}\mu \int_\Omega\left( |\nabla u(x,t)|^2 + |\nabla v(x,t)|^2\right) \ dx \leq C(\|f\|_{\infty},\|g\|_{\infty},\|u_0\|_{C^2},\|v_0\|_{C^2}), 
\end{equation*}
with a positive $C,$ independent of time.
\end{lemma}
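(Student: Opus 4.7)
The strategy is an energy estimate against a lift of the boundary data, combined with the uniform-in-time $L^2$ control of $u_t, v_t$ provided by Lemma \ref{lemma:stab}. First, I would fix time-independent extensions $u^\ast, v^\ast \in C^{2+\alpha}(\overline{\Omega})$ of the boundary values with $u^\ast|_{\partial\Omega} = u_b$, $v^\ast|_{\partial\Omega} = v_b$; by the compatibility conditions one may simply take $u^\ast = u_0$, $v^\ast = v_0$, so every norm of $u^\ast, v^\ast$ is controlled by $\|u_0\|_{C^2}, \|v_0\|_{C^2}$.

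Next, rewrite the equation for $u$ as $-\mu \Delta u = -u_t - au + bu^2/(1+v) + f$, multiply by the zero-trace function $u - u^\ast$, and integrate over $\Omega$. Green's identity produces no boundary contribution, giving
\begin{equation*}
\mu \int_\Omega |\nabla u|^2 \, dx \;=\; \mu \int_\Omega \nabla u \cdot \nabla u^\ast \, dx \;+\; \int_\Omega (u - u^\ast)\Bigl[-u_t - au + \tfrac{bu^2}{1+v} + f\Bigr] dx.
\end{equation*}
Young's inequality on the first term absorbs $\tfrac{\mu}{2}\|\nabla u\|_2^2$ into the left-hand side, leaving $\tfrac{\mu}{2}\|\nabla u^\ast\|_2^2$.

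It remains to bound the right-hand side uniformly in $t$. By Lemma \ref{lemma:uniform}, $u$ and $v$ are bounded uniformly in $(x,t)$, so $\|u - u^\ast\|_2$ and the $L^2$-norm of the reaction term are both controlled by constants depending only on the problem data. For the $u_t$ contribution, Cauchy--Schwarz gives $\bigl|\int (u - u^\ast)\, u_t\, dx\bigr| \leq \|u - u^\ast\|_2 \, \|u_t\|_2$, and Lemma \ref{lemma:stab} supplies $\|u_t(\cdot,t)\|_2 \leq \|u_0\|_{C^2}$ for every $t \geq 0$. The argument for $v$ is identical, and summing the two estimates yields the claim.

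The main obstacle is precisely the time-uniform control of $\|u_t\|_2, \|v_t\|_2$: without the exponential decay from Lemma \ref{lemma:stab}, the right-hand side above would not be bounded independently of $t$ (integrating the standard dissipation estimate $\tfrac{d}{dt}\|u - u^\ast\|_2^2 + \mu\|\nabla u\|_2^2 \leq C$ only produces a bound growing linearly in $T$). The boundary-lift device is a routine adjustment needed so that Green's identity can be applied without boundary corrections, since the solution itself is not of zero trace.
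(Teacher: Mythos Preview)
Your argument is correct but follows a different route from the paper. The paper multiplies each equation by its own time derivative ($u_t$ vanishes on $\partial\Omega$, so no lift is needed), which yields the evolution inequality
\[
\mu\,\frac{d}{dt}\int_\Omega\bigl(|\nabla u|^2+|\nabla v|^2\bigr)\,dx \;\le\; c_1\bigl(\|u_t\|_2+\|v_t\|_2\bigr),
\]
and then integrates in time, using the exponential decay of $\|u_t\|_2+\|v_t\|_2$ from Lemma~\ref{lemma:stab} to make $\int_0^\infty(\|u_t\|_2+\|v_t\|_2)\,dt$ finite. Your approach instead treats the equation at a fixed time as an elliptic problem with source $-u_t+\text{(reaction)}$, tests against $u-u_0$, and obtains a direct pointwise-in-$t$ bound on $\mu\|\nabla u\|_2^2$. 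The trade-off: the paper avoids the boundary lift because $u_t$ already has zero trace, but must integrate in time and therefore needs the \emph{integrability} of $\|u_t\|_2$; your version needs the lift but only the weaker \emph{uniform boundedness} $\sup_t\|u_t\|_2<\infty$, which under the stability condition is immediate from the same Gr\"onwall step. Both produce the same final constant structure, including the term $\mu\|\nabla u_0\|_2^2$ on the right.
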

\begin{proof}
We multiply for $u$ by $u_t$ and integrate by parts to get,
\begin{equation*}
\int_\Omega\ |u_t|^2\ dx - \mu \int_\Gamma u_t \nabla u\cdot n\ d\sigma + \mu \int_\Omega\ \nabla u \cdot \nabla u_t\ dx= -a\int_\Omega\ u u_t\ dx + b \int_\Omega\ \frac{u^2}{1+v} u_t\ dx + \int_\Omega\ f(x) u_t\ dx. \end{equation*}
Using boundary condition on $u_t$ and uniform bounds on $u$ and $v$ from lemma \ref{lemma:uniform},
\begin{eqnarray*}
\int_\Omega\ |u_t|^2\ dx + \mu \frac{1}{2}\frac{d}{dt} \int_\Omega\ |\nabla u|^2 \ dx & = -a\int_\Omega\ u u_t\ dx + b \int_\Omega\ \frac{u^2}{1+v} u_t\ dx + \int_\Omega\ f(x) u_t\ dx\\
& \leq \left( c_0+ \|f\|_\infty \right) \int_\Omega\ |u_t|\ dx = c_1 \|u_t\|_2,
\end{eqnarray*}
with an appropriate  $c_0.$
Similarly, for $v$, we get,
\begin{equation*}\int_\Omega |v_t|^2\ dx + \mu \frac{1}{2} \frac{d}{dt} \int_\Omega|\nabla v|^2 \ dx  \leq c_1 \|v_t\|_2.
\end{equation*} 
Adding the last two inequalities we get,
\begin{equation*}
    \mu \frac{d}{dt} \int_\Omega\ \left( |\nabla u|^2 + |\nabla v|^2\right) \ dx \leq c_1\left( \|u_t\|_2 + \|v_t\|_2\right).
\end{equation*}

We integrate this inequality in time from $0$ to $t,$ use exponential decay estimate on $\|u_t\|_2,$ $\|v_t\|_2$ from previous lemma, together with uniform bounds on $u$ and $v$ to get:
\begin{eqnarray*}
    \mu \int_\Omega\left( |\nabla u(x,t)|^2 + |\nabla v(x,t)|^2\right) \ dx & \leq \mu \int_\Omega\left( |\nabla u_0|^2 + |\nabla v_0|^2\right) \ dx + c_1 \int_0^t \left( \|u_t\|_2 + \|v_t\|_2\right)\ dt
 {}\leq{}   C,
\end{eqnarray*}
with some $C>0,$ as stated in the lemma. 
\end{proof}


Let $u(x,t),$ $v(x,t)$ be the solution of the section from previous section. Let $t_n$ be a non-decreasing sequence of time converging to $+\infty. $ Consider sequences of functions $\{u_n(x){}={}u(x,t_n)\}$ and
$\{v_n(x)=v(x,t_n)\}.$ From the estimates of $u,v$ and their gradients, it holds that there is $C$ independent of $n$ such that
\[
\|u_n\|_2{}\leq{}C,\quad \|v_n\|_2\leq C,
\]
\[
\|\grad u_n\|_2{}\leq{}C,\quad \|\grad v_n\|_2\leq C.
\]
We will need the following compactness result, the proof of which can be found in  chapter 5 of Evans book \cite{evans}.
\begin{lemma}
Assume $\Omega$ is a bounded open subset of $\mathbb{R}^n$, and $\partial \Omega$ is $C^1$. Suppose $1\leq p <n$. Then, there is a compact embedding
\begin{equation}
    W^{1,p}(\Omega) \subset \subset L^q(\Omega), \label{thm:relkon}
\end{equation}
for each $1\leq q <\frac{np}{n-p}.$ 
\end{lemma}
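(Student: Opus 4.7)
The plan is to prove this Rellich--Kondrachov type compactness by combining the Sobolev embedding with a mollifier approximation and the Arzel\`a--Ascoli theorem, following the standard route. First, I would invoke the Gagliardo--Nirenberg--Sobolev inequality to obtain the continuous embedding $W^{1,p}(\Omega)\hookrightarrow L^{p^*}(\Omega)$ with $p^*=\frac{np}{n-p}$. Because $\Omega$ is bounded and $1\le q<p^*$, any bounded sequence $\{u_m\}\subset W^{1,p}(\Omega)$ is automatically bounded in $L^q(\Omega)$; the real content is to extract an $L^q$-convergent subsequence.

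Next, using the $C^1$ regularity of $\partial\Omega$, I would fix a bounded extension operator $E:W^{1,p}(\Omega)\to W^{1,p}(\mathbb{R}^n)$ producing extensions $\bar u_m:=Eu_m$ supported in a single bounded open set $V\supset\supset\Omega$. Convolving with a standard mollifier $\eta_\epsilon$, set $u_m^\epsilon:=\eta_\epsilon\ast \bar u_m$. The central estimate is
\[
\|u_m^\epsilon-\bar u_m\|_{L^1(V)}\le \epsilon\,\|\nabla \bar u_m\|_{L^1(V)},
\]
obtained by writing $u_m^\epsilon(x)-\bar u_m(x)$ as an iterated integral of $\nabla \bar u_m$ along line segments of length at most $\epsilon$. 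Since the sequence is uniformly bounded in $W^{1,p}$ and hence in $L^{p^*}$, interpolating $L^q$ between $L^1$ and $L^{p^*}$ yields
\[
\|u_m^\epsilon-\bar u_m\|_{L^q(V)}\le \|u_m^\epsilon-\bar u_m\|_{L^1(V)}^{\theta}\,\|u_m^\epsilon-\bar u_m\|_{L^{p^*}(V)}^{1-\theta}\longrightarrow 0
\]
as $\epsilon\to 0$, uniformly in $m$, with $\theta\in(0,1]$ determined by $\tfrac{1}{q}=\theta+\tfrac{1-\theta}{p^*}$.

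For each fixed $\epsilon>0$, I would observe that the smoothed sequence $\{u_m^\epsilon\}$ is uniformly bounded and equi-Lipschitz on $\overline V$; standard convolution estimates control $\|u_m^\epsilon\|_\infty$ and $\|\nabla u_m^\epsilon\|_\infty$ in terms of $\epsilon$ and $\|\bar u_m\|_{L^p}$. Arzel\`a--Ascoli then extracts a uniformly convergent subsequence, which in particular converges in $L^q(V)$. A standard diagonal argument over $\epsilon_k\downarrow 0$, combined with the uniform-in-$m$ approximation property displayed above, produces a single subsequence of $\{u_m\}$ that is Cauchy in $L^q(\Omega)$, establishing the compactness of the embedding.

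The only genuinely nontrivial ingredient to assemble is the extension operator $E$, whose construction requires the $C^1$ regularity of $\partial\Omega$ via a finite partition of unity together with a local boundary-straightening change of coordinates and reflection of the straightened function across the flattened boundary. Everything else --- the Sobolev embedding, the mollifier approximation in $L^1$, the interpolation step, Arzel\`a--Ascoli, and the diagonal extraction --- is essentially routine once $E$ is in hand, so the main obstacle is really hidden in the geometric boundary regularity rather than in the functional-analytic core of the argument.
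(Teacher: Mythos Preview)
Your sketch is correct and follows the standard Evans route (extension, mollification, the $L^1$ estimate $\|u_m^\epsilon-\bar u_m\|_{L^1}\le\epsilon\|\nabla\bar u_m\|_{L^1}$, interpolation against $L^{p^*}$, Arzel\`a--Ascoli, diagonal extraction). The paper does not supply its own proof of this lemma at all; it simply states the result and refers the reader to chapter~5 of Evans~\cite{evans}, so your proposal is in fact a fleshed-out version of exactly the argument the paper is pointing to.
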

Since $\Omega$ is a bounded set it follows this  theorem that both sequences are pre-compact in $L^2(\Omega).$ This means that there is a subsequence of $\{t_n\},$ that we still label by $n,$
and two functions $u,v\in W^{1,2}(\Omega)$ such that 
\[
\lim_{n\to\infty}u_n{}={}u_s,\quad \lim_{n\to\infty}v_n{}={}v_s
\]
in $L^2$ norm. From the convergence in norm, it follows that a further subsequence can be extracted such that $u_n$ and $v_n$ converge to $u_s$ and $v_s$ almost everywhere in $\Omega.$\\
Moreover, since $L^2(\Omega)$ is a reflexive space and $\grad u_n,$ $\grad v_n$ are from bounded sets,
there is still further subsequence such that 
\[
\grad u_n \to \grad u_s,\quad \grad v_n \to \grad v_s,
\]
weakly in $L^2(\Omega).$

Notice, also, that as time derivatives of $u,v$ are bounded,
\[
u_t(x,t_n) \to 0,\quad v_t(x,t_n)\to 0,
\]
in $L^2$ norm.

Now we pass to the limit in the equations.
\begin{lemma}
The limiting pair of functions $(u_s,v_s)$ is a classical  solution of the system of equations:
\begin{eqnarray*}
\label{eq:s_1}
-\mu\Delta u_s{}&=&{}-a u_s + \frac{bu_s^2}{1+v_s}{}+{}f,\\
\label{eq:s_2}
-\mu \Delta v_s{}&={}&-c v_s{}+dv_s^2{}+{}g.
\end{eqnarray*}
\end{lemma}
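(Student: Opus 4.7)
The idea is to test the parabolic equation at time $t=t_n$ against a smooth compactly supported function, pass to the limit along the chosen subsequence using the convergences already assembled above, and then bootstrap regularity.

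For any $\varphi\in C_c^\infty(\Omega)$, integration by parts converts the $u$-equation at time $t_n$ into
\begin{equation*}
\int_\Omega u_t(\cdot,t_n)\varphi\,dx + \mu\int_\Omega \nabla u_n\cdot\nabla\varphi\,dx = -a\int_\Omega u_n\varphi\,dx + b\int_\Omega\frac{u_n^2\varphi}{1+v_n}\,dx + \int_\Omega f\varphi\,dx,
\end{equation*}
and analogously for $v$. I would then let $n\to\infty$ term by term: the first term vanishes because $u_t(\cdot,t_n)\to 0$ in $L^2$; the second converges to $\mu\int_\Omega \nabla u_s\cdot\nabla\varphi\,dx$ by weak $L^2$ convergence of the gradients; the linear reaction term converges by strong $L^2$ convergence of $u_n$; and the nonlinear term converges by dominated convergence, justified by the uniform $L^\infty$ bounds of Lemma \ref{lemma:uniform}, the pointwise inequality $1+v_n\geq 1$ (so that the integrand is dominated by a fixed $L^\infty$ function times $|\varphi|$), and the pointwise a.e.\ convergence $u_n\to u_s$, $v_n\to v_s$. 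This exhibits $(u_s,v_s)$ as a distributional, hence $W^{1,2}$ weak, solution of the steady-state system.

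Next I would upgrade regularity by standard elliptic bootstrap. Since $u_s,v_s\in L^\infty(\Omega)$ and $f,g\in C^\alpha(\overline\Omega)$, the right-hand sides of the stationary equations lie in $L^\infty$, so interior $W^{2,p}$ estimates give $u_s,v_s\in C^{1,\beta}_{\mathrm{loc}}$; then Schauder theory applied to the resulting $C^\alpha$ right-hand sides yields $u_s,v_s\in C^{2+\alpha}(\overline\Omega)$ after invoking the $C^{2+\alpha}$ regularity of $\partial\Omega$. The Dirichlet condition is recovered by fixing $C^{2+\alpha}$ extensions $\tilde u_b,\tilde v_b$ of $u_b,v_b$ to $\overline\Omega$: each $u_n-\tilde u_b$ sits in the closed subspace $W_0^{1,2}(\Omega)$, and the combination of strong $L^2$ convergence with weak gradient convergence forces the limit $u_s-\tilde u_b$ to lie there as well, which gives $u_s=u_b$ on $\partial\Omega$ in the trace sense, and classically once the Schauder upgrade is in hand.

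\textbf{Main obstacle.} The central subtlety is the passage to the limit in the nonlinear reaction $u_n^2/(1+v_n)$: since only weak convergence of gradients is available, the argument must depend solely on pointwise values, and so the whole step rests on combining the uniform $L^\infty$ bounds with the a.e.\ convergence extracted earlier to legitimize dominated convergence. A secondary technical point is recovering the Dirichlet condition for the weak limit before the classical upgrade, which the closed-subspace argument for $W_0^{1,2}$ renders routine.
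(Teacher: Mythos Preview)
Your proposal is correct and follows essentially the same approach as the paper: test the equation at $t=t_n$ against a smooth function vanishing on the boundary, pass to the limit using the previously established convergences to obtain a weak solution, and then invoke elliptic regularity to upgrade to a classical $C^{2+\alpha}$ solution. Your treatment is in fact more detailed than the paper's, particularly in justifying the convergence of the nonlinear term via dominated convergence and in recovering the Dirichlet boundary condition through the closed-subspace argument in $W_0^{1,2}$.
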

\begin{proof}
Let $\omega(x)$ be a smooth test function, equal to zero on the boundary $\partial \Omega.$ From the original reaction-diffusion system, considered at times $t=t_n$ we obtain the following integral relations:
\begin{eqnarray*}
\int u_t(x,t_n)\omega(x)\,dx {}+{}\mu\int \grad u_n\cdot\grad \omega(x)\,dx &=&
\int \left( -a u_n + \frac{bu_n^2}{1+v_n}{}+{}f(x)\right)\omega(x)\,dx,\\
\int v_t(x,t_n)\omega(x)\,dx {}+{}\mu \int \grad v_n\cdot\grad \omega(x)\,dx &=&
\int \left( -c v_n{}+dv_n^2{}+{}g(x)\right)\omega(x)\,dx.
\end{eqnarray*}
Passing to the limit in each term of these equations, using above compactness properties we obtain that
\begin{eqnarray*}
\mu\int \grad u_s\cdot\grad \omega\,dx &=&
\int \left( -a u_s + \frac{bu_s^2}{1+v_s}{}+{}f\right)\omega\,dx,\\
\mu \int \grad v_s\cdot\grad \omega\,dx &=&
\int \left( -c v_s{}+dv_s^2{}+{}g\right)\omega\,dx.
\end{eqnarray*}
i.e, $(u_s,v_s)$ is a weak solution. As a pointwise limit of $u(x,t_n),$ $v(x,t_n),$ $(u_s,v_s)$ take boundary values $u_b$ and $v_b.$ By the well know regularity results for elliptic equations, it follows that $u_s,v_s\in C^{2+\alpha}(\overline{\Omega}),$ and it is classical solutions of the same system.
\end{proof}
Now we prove the following. 
\begin{lemma}
As $t\rightarrow \infty$,  $u(x,t), v(x,t)$ of the system converges to $u_s(x), v_s(x)$ in $L_2$ norm:
\[
\lim_{t\to\infty}\| u(x,t)-u_s(x)\|_2{}={}0,\quad 
\lim_{t\to\infty}\| v(x,t)-v_s(x)\|_2{}={}0.
\]
\end{lemma}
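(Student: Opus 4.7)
The plan is to upgrade the subsequential convergence provided by the previous lemma to full convergence, and the key ingredient is the exponential decay of $\|u_t\|_2$ and $\|v_t\|_2$ that was obtained in Lemma \ref{lemma:stab}. Concretely, under the stability condition \eqref{eq:stab} one has $\|u_t(\cdot,t)\|_2+\|v_t(\cdot,t)\|_2 \leq M e^{-\beta t}$ for some $\beta>0$ and $M$ depending only on the data, which is summable in time. The strategy is to use this summability to show that $t\mapsto u(\cdot,t)$ and $t\mapsto v(\cdot,t)$ are Cauchy nets in $L^2(\Omega)$, and then identify the limit with the steady state $(u_s,v_s)$ extracted along $\{t_n\}$.

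For the Cauchy step, I would fix $s<t$ and write, using the fundamental theorem of calculus in time,
\begin{equation*}
u(x,t)-u(x,s){}={}\int_s^t u_t(x,\tau)\,d\tau.
\end{equation*}
Taking the $L^2(\Omega)$ norm in $x$ and applying Minkowski's integral inequality yields
\begin{equation*}
\|u(\cdot,t)-u(\cdot,s)\|_2 {}\leq{}\int_s^t \|u_t(\cdot,\tau)\|_2\,d\tau {}\leq{}\frac{M}{\beta}\bigl(e^{-\beta s}-e^{-\beta t}\bigr),
\end{equation*}
with an analogous estimate for $v$. The right-hand side tends to $0$ as $s,t\to+\infty$, so $\{u(\cdot,t)\}$ and $\{v(\cdot,t)\}$ are Cauchy in $L^2(\Omega)$ and therefore converge to some limits $u^{*},v^{*}\in L^2(\Omega)$ as $t\to+\infty$.

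To identify these limits, I would invoke the previous lemma: there is a sequence $t_n\to+\infty$ along which $u(\cdot,t_n)\to u_s$ and $v(\cdot,t_n)\to v_s$ in $L^2(\Omega)$, where $(u_s,v_s)$ is a classical steady state. Since the full limits $u^{*},v^{*}$ exist in $L^2$, the subsequential limits must coincide with them, giving $u^{*}=u_s$ and $v^{*}=v_s$, which is the desired convergence. The only mild subtlety to watch is justifying the Minkowski step, which is routine once one recalls that $u\in C^{2+\alpha,1+\alpha/2}(Q_T)$ and $u_t$ is jointly continuous; no further regularity or compactness machinery is required. In contrast with the previous lemma, where extracting a convergent subsequence was the technical heart of the argument, here the main obstacle is essentially already overcome, and the remaining work is just to package the exponential $L^2$-decay of the time derivative into a Cauchy criterion.
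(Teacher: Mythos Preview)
Your argument is correct, and it is genuinely different from the paper's. The paper argues by contradiction: if $(u(\cdot,t),v(\cdot,t))$ fails to converge to $(u_s,v_s)$, one extracts a second sequence $t_n\to\infty$ staying $\epsilon$-away from $(u_s,v_s)$, applies the same compactness as before to produce a second steady state $(\tilde u_s,\tilde v_s)$, and then uses an energy estimate (the analogue of Lemma~\ref{lemma:stab} for the difference of two steady states) to conclude $(\tilde u_s,\tilde v_s)=(u_s,v_s)$, a contradiction. Your route instead packages the exponential decay of $\|u_t\|_2+\|v_t\|_2$ directly into a Cauchy criterion via Minkowski's inequality, obtaining a full $L^2$ limit without any further compactness or uniqueness argument; the subsequential limit then pins down the target. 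Your proof is shorter and more elementary, and uses only what has already been proved. The paper's proof, on the other hand, yields as a byproduct the uniqueness of the steady state under the stability condition, a fact that is reused (in essentially the same form) in the next lemma on the basin of attraction.
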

\begin{proof}
Suppose that $(u(x,t),v(x,t))$ does not converge to $(u_s,v_s)$ in $L^2$ norm. Then, there is a sequence of times $t_n$ and $\epsilon>0$ such that
\[
\|(u(x,t_n),v(x,t_n))-(u_s(x),v_s(x))\|_2\geq\epsilon.
\]
Using the arguments of this section we conclude that there is another steady-state $(\tilde{u}_s,\tilde{v}_s)$ and 
\begin{equation}
\label{eq:conv}
\|(\tilde{u}_s,\tilde{v}_s)-(u_s(x),v_s(x))\|_2\geq\epsilon.
\end{equation}
Since $(u_s,v_s)$ and $(\tilde{u}_s,\tilde{v}_s)$ solve the same system of equations, subtraction corresponding equations, multiplying them by
$\tilde{u}_s-u_s$ and $\tilde{v}_s-v_s,$ and integrating over $\Omega,$ after simple manipulations we get
\[
(\mu C(\Omega) - C_s)\int |\tilde{u}_s-u_s|^2{}+{}|\tilde{v}_s-v_s|^2\,dx{}\leq{}0,
\]
where $C_s$ and $C(\Omega)$ as in \eqref{eq:stab}.
Since $\mu C(\Omega) - C_s$ is positive, we conclude that $\tilde{u}_s{}={}u_s$ and 
$\tilde{v}_s{}={}v_s.$ This clearly contradicts statement \eqref{eq:conv} and the lemma is proved.
\end{proof}



In the next theorem we show that the steady state $(u_s,v_s)$ is stable  and ``attracts"  solutions of the reaction-diffusion system, with the same source terms $f(x),$ $g(x),$ provided that the solution of the latter satisfy the stability condition \eqref{th:stab}.

\begin{lemma}
Let $(u,v)$ be a classical solution of the reaction-diffusion system \eqref{eq:u1}, \eqref{eq:v1} with initial data $u_0,v_0$ in $B(f,g,\mu)\cap C^{2+\alpha}$ and boundary conditions \eqref{bc:exist}. Then,  for any $t>0,$
\begin{equation}
\|(u(x,t)-u_s(x),v(x,t)-v_s(x))\|_2 \leq e^{-Kt} \|(u_0(x)-u_s(x),v_0(x)-v_s(x))\|_2,    
\end{equation}
where $K= 2(\mu C(\Omega)- C_s).$
\end{lemma}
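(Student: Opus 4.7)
Since $u$ and $u_s$ share the Dirichlet datum $u_b$ on $\partial\Omega$ (and similarly $v, v_s$ share $v_b$), the differences $U:=u-u_s$ and $V:=v-v_s$ vanish identically on $\partial\Omega$. This is the key structural observation: it makes Poincar\'e's inequality available for $U,V$, so I can essentially rerun the energy computation of Lemma~\ref{lemma:stab} but on the difference $(U,V)$ in place of the time derivative $(u_t,v_t)$.

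\textbf{Step 1: equations for the differences.} Subtracting the steady-state equations from \eqref{eq:u1}--\eqref{eq:v1}, the source terms $f(x),g(x)$ cancel, leaving
\begin{align*}
\partial_t U - \mu\Delta U &= -aU + b\left(\frac{u^2}{1+v} - \frac{u_s^2}{1+v_s}\right),\\
\partial_t V - \mu \Delta V &= -cV + d(u+u_s)U,
\end{align*}
together with $U=V=0$ on $\partial\Omega$. The crucial algebraic identity for the first nonlinearity is
\begin{equation*}
\frac{u^2}{1+v} - \frac{u_s^2}{1+v_s} = \frac{(u+u_s)\,U}{1+v} - \frac{u_s^2\, V}{(1+v)(1+v_s)}.
\end{equation*}
By Lemma~\ref{lemma:uniform}, $u,u_s,v,v_s$ are nonnegative and uniformly bounded by a constant depending only on $\|f\|_\infty,\|g\|_\infty,\|u_0\|_\infty,\|v_0\|_\infty$; hence both reaction differences are pointwise controlled by $C(|U|+|V|)$ with $C$ a polynomial in those uniform bounds.

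\textbf{Step 2: energy estimate and Poincar\'e.} I multiply the $U$-equation by $U$, the $V$-equation by $V$, integrate over $\Omega$, and integrate by parts. The boundary integrals vanish because of the zero trace. The decay contributions $-a\|U\|_2^2-c\|V\|_2^2$ are favorable and can be discarded. The reaction contributions, using the pointwise bounds above and Young's inequality $|U|\,|V|\leq \tfrac12(|U|^2+|V|^2)$, are controlled by $C_s(\|U\|_2^2+\|V\|_2^2)$ with the \emph{same} polynomial $C_s$ that appears in Lemma~\ref{lemma:stab}. Applying the Poincar\'e inequality~\eqref{thm:Poincare} to $U$ and $V$ (valid precisely because they vanish on $\partial\Omega$) and combining, I obtain
\begin{equation*}
\frac{1}{2}\frac{d}{dt}\bigl(\|U\|_2^2+\|V\|_2^2\bigr)\;\leq\;-\bigl(\mu C(\Omega)-C_s\bigr)\bigl(\|U\|_2^2+\|V\|_2^2\bigr).
\end{equation*}

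\textbf{Step 3: Gr\"onwall.} Under the stability hypothesis \eqref{th:stab}, the coefficient $\mu C(\Omega)-C_s$ is strictly positive, so Gr\"onwall's inequality yields exponential decay of $\|U\|_2^2+\|V\|_2^2$ at rate $K=2(\mu C(\Omega)-C_s)$, which is the stated conclusion.

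\textbf{Main obstacle.} The only delicate step is the algebraic decomposition of the fraction $\tfrac{u^2}{1+v}-\tfrac{u_s^2}{1+v_s}$ in Step~1. It must be rearranged so that the resulting expression is dominated \emph{linearly} by $|U|+|V|$, with coefficients depending only on the uniform bounds from Lemma~\ref{lemma:uniform}. Without this, the Young's inequality step would produce a bound whose dependence on $\max_\Omega u,\max_\Omega v$ would not match the polynomial $C_s$ that already appears in the stability condition \eqref{th:stab}, and one could not directly reuse that condition to close the argument.
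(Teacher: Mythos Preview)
Your proof is correct and follows essentially the same approach as the paper: define the differences $U=u-u_s$, $V=v-v_s$, subtract the equations, exploit the zero Dirichlet trace of $(U,V)$ to run the same energy/Poincar\'e argument as in Lemma~\ref{lemma:stab}, and conclude by Gr\"onwall. The paper's proof is terser (it simply invokes ``as in the proof of lemma~\ref{lemma:ut}'' without writing out the algebraic decomposition of the nonlinearity that you make explicit), but the strategy and the key ingredients are identical.
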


\begin{proof}

Let $(U,V) = (u-u_s, v-v_s)$. 
Subtracting corresponding equation for $(u,v)$ and $(u_s,v_s)$ we get,
\begin{equation*}\partial_t{U} - \mu \Delta{U} = -aU + b \left(\frac{u^2}{1+v} - \frac{u_s^2}{1+v_s}\right),
\end{equation*}
\begin{equation*}\partial_t{V} - \mu \Delta{V} = -cV + d \left(u^2-u_s^2\right).
\end{equation*}
As in the proof of lemma \ref{lemma:ut} we obtain 
\begin{equation*}
    \frac{1}{2}\ \frac{d}{dt}\int |U|^2+|V|^2\ dx{}+{}
    2(\mu C(\Omega) - C_s)\int |U|^2{}+{}|V|^2\,dx{}\leq{}0.
\end{equation*}
Using Gronwall's inequality, we get that,
\begin{equation*}
\int |U(x,t)|^2+|V(x,t)|^2\ dx \leq e^{-Kt} \int |U_0(x)|^2+|V_0(x)|^2\ dx. \end{equation*}
where $K = 4(\mu C(\Omega) -C_s)>0.$
\end{proof}


\section{Conclusions}

In this paper we address the mathematical modeling of  recent experimental studies on  self-organization of human embryonic stem cells during early stages embryo's development.
Although several models based on reaction-diffusion equations were proposed in literature, those results are   only partially satisfactory as they either contain a number of artificial assumptions on the reaction part of the model or use initial and boundary conditions that do not correspond to the experimental setup. 

We showed that an Gierer-Meinhardt system of reaction-diffusion equations, with properly selected reaction coefficients and 
supplemented with Robin-type boundary conditions, qualitatively reproduces many of the experimental findings, thus identifying a proper mathematical framework. In this paper we only present numerical results for circular domains for brevity of presentation. Additional experiments with domains of irregular shapes, including non-convex domains, were presented  by Bedekar \cite{thesis}. All numerical simulations confirm very good qualitative agreement between our models and in vitro experiments.
Moreover, the model  produces  various new phenomena for the reaction-diffusion system under investigation such as an interesting instability investigated numerically in section \ref{sec:3.4}.

The second part of the paper is motivated by the numerical results obtained in the first part, and addresses the existence of of non-homogeneous steady state solutions and the asymptotically attract  solutions of the reactions-diffusion system. 
In general, this is a hard mathematical problem, which we were able to address under certain simplifying assumptions about the system.

With proper ramifications, the model considered in this paper can potentially lead to important scientific insights into the behavior of the biological system. In particular, instabilities outlined in numerical experiments performed here warrant
careful analytical investigation. In addition, we can use experimental data to estimate parameters in the PDE 
model via a Bayesian approach and use the resulting realistic model to predict outcomes of experiments in 
domains of various sizes. We intend to carry out further investigation of the reaction-diffusion model presented here
in subsequent papers.

\section{Appendix}
We use the following activator-inhibitor system for the dynamics of BMP/Nogin:
\begin{eqnarray*}
\partial_t u{}-{}\mu_{bmp}\Delta u{}&=&{}-\lambda_{bmp}u {}+{}k_{bmp}\frac{u^2}{\tilde{v}+v},\\
\partial_t v{}-{}\mu_{nog}\Delta v{}&=&{}-\lambda_{nog}v{}+{}\frac{k_{nog}}{\tilde{u}}u^2,
\end{eqnarray*} 
where $\tilde{u},$ $\tilde{v}$ are some reference values for BMP4 and Noggin. The boundary conditions are
\[
\frac{\partial u}{\partial n}{}={}H_{bmp}\left(\bar{u}-u\right),\quad 
\frac{\partial v}{\partial n}{}={}-H_{nog} v.
\]
where $\bar{u}$ is the background value of BMP4, and $H_{bmp},\, H_{nog}$ are positive numbers.
The initial conditions: $u(x,0){}={}\bar{b},$ $v(x,0)=0,$ which correspond to a cell colony being treated with high concentration of BMP4. The typical magnitudes of the parameters are listed in Table \ref{tab:param}.

\begin{table}
\label{tab:param}
\centering
\begin{tabular}{@{} lcc  @{}}    \toprule 
 {\rm Parameters} & {\rm Values} & {\rm SI\,\, units}\\ \midrule
    $\mu_{bmp}$ & 11 &  $\mu m^2/sec$\\
    $\mu_{nog}$ & 55 &  $ \mu m^2/sec$\\
    $\lambda_{bmp}$ & $9\times10^{-4}$ & $1/sec$ \\
    $k_{bmp}$ & $9\times10^{-4}$ & $1/sec$ \\
    $\lambda_{nog}$ & $9\times10^{-4}$ & $1/sec$ \\
    $k_{nog}$ & $9\times10^{-4}$ & $1/sec$\\ \bottomrule
    \\
\end{tabular}
\caption{Values for the diffusion and reaction parameters.}
\end{table}
The colony size (radius of the disk) $L= 500\,\mu m,$ and a typical experiment takes up to 3 days ($3\tau,$ $\tau{}={}86400sec$).
The experimental data on the values of $H_{bmp}$ and $H_{nog}$ are not available. We set them to $1\,(\mu m)^{-1}.$

Scaling the variables: $x\to Lx,$ $t\to \tau t,$ $u\to \tilde{u} u,$ $v\to \tilde{v}v,$ with $\tilde{u}/\tilde{v}=1,$ we obtain a system with non-dimensionless coefficients:
\begin{eqnarray*}
\partial_t u{}-{}\frac{\mu_{bmp}\tau}{L^2}\Delta u{}&=&{}-(\lambda_{bmp}\tau) u {}+{}(k_{bmp}\tau) \frac{u^2}{1+v},\\
\partial_t v{}-{}\frac{\mu_{nog}\tau}{L^2}\Delta v{}&=&{}-(\lambda_{nog})\tau v{}+{} (k_{nog}\tau) u^2,
\end{eqnarray*}
with the boundary conditions
\[
\frac{\partial u}{\partial n}{}={}H_{bmp}L\left(\frac{\bar{u}}{\tilde{u}}-u\right), \quad 
\frac{\partial v}{\partial n}{}={}-H_{nog}L v.
\]
This leads to the system \eqref{eq:u}, \eqref{eq:v} with coefficients $h_u = H_{bmp}L,$ $h_v = H_{nog} L,$ $\mu_u = \mu_{bmp}\tau L^{-2},$ $\mu_v = \mu_{nog}\tau L^{-2},$ $a = \lambda_{bmp}\tau,$ $b=k_{bmp}\tau,$ $c = \lambda_{nog}\tau,$ $d=k_{nog}\tau,$ the values of which are listed in Table \ref{tab:1}. For the boundary and initial conditions, ratio $\bar{u}/{\tilde{u}}=3.$ The scaling for Wnt/DKK and Nodal/Lefty RD systems are similar.

The numerical simulations are based on the forward Euler approximation of time derivatives with finite element methods, using piece-wise linear functions for the space discretization, and a suitable triangulation of the domain. Space and time partitions steps, $(h,\delta)$ were set to $h=10^{-3}$ and $\delta{}={}10^{-6},$ with $\delta= h^2,$ to prevent numerical instabilities.  The method was implemented using FreeFem++, see Heicht \cite{Hecht}.

\end{document}